\documentclass[article]{siamart}
\usepackage{amsmath,amssymb}
\usepackage{caption}
\usepackage{hyperref}
\usepackage[utf8]{inputenc}
\usepackage{mathtools}
\usepackage{color,graphicx}
\usepackage{thmtools, thm-restate}
\usepackage{enumitem}
\usepackage{algorithmicx}
\usepackage{algpseudocode}
\usepackage{tikz}
\usetikzlibrary{arrows}
\usetikzlibrary{3d,calc}

\newcommand{\cF}{\mathcal{F}}

\newcommand{\cJ}{\mathcal{J}}

\newcommand{\cO}{\mathcal{O}}
\newcommand{\cT}{\mathcal{T}}
\newcommand{\cM}{\mathcal{M}}
\newcommand{\cP}{\mathcal{P}}

\newcommand{\cS}{\mathcal{S}}

\newcommand{\bnu}{\mbox{\boldmath$\nu$}}
\newcommand{\brho}{\boldsymbol{\rho}}

\newcommand{\bfRho}{\mbox{\boldmath$\mathrm{P}$}}
\newcommand{\beq}{\begin{equation}}
\newcommand{\eeq}{\end{equation}}
\def\bals#1\eals{\begin{align*} #1 \end{align*}}
\def\bal#1\eal{\begin{align} #1 \end{align}}
\newcommand{\dsp}[1]{\displaystyle{#1}}
\newcommand{\floor}[1]{\left\lfloor #1 \right\rfloor}

\newcommand{\Iin}{\quad \text{ in } }

\newcommand{\where}{\quad \text{ where } }
\newcommand{\Ffor}{\quad \text{ for }}
\newcommand{\Aand}{\quad \text{ and } \quad }

\newcommand\Dom\Omega

\newcommand\RR{\mathbb{R}}

\newcommand\ZZ{\mathbb{Z}}

\newcommand\Lap\Delta

\newcommand\abs[1]{\left\lvert #1 \right\rvert}

\newcommand\dx{\,\mathrm{d}x}

\newcommand\dt{\,\mathrm{d}t}

\def\bpde#1\epde{\[\left\{\begin{aligned}#1\end{aligned}\right. \]}
\def\inbpde#1\inepde{\left\{\begin{aligned}#1\end{aligned}\right.}
\def\binpde#1\einpde{\left\{\begin{aligned}#1\end{aligned}\right.}

\newcommand\prn[1]{\left( {#1} \right)}
\newcommand\prnc[1]{\left\{ {#1} \right\}}

\newcommand\brkt[2]{\left\langle {#1},{#2} \right\rangle}

\newcommand\Norm[2]{\left\lVert { #1 } \right\rVert_{#2}}


\def\cC{\mathcal{C}}

\def\cK{\mathcal{K}}
\def\cI{\mathcal{I}}

\def\bu{\mathbf{u}}
\def\bv{\mathbf{v}}




\def\bfA{\mathbf{A}}
\def\bfB{\mathbf{B}}

\def\bfD{\mathbf{D}}

\def\bfH{\mathbf{H}}
\def\bfI{\mathbf{I}}

\def\bfK{\mathbf{K}}

\def\bfP{\mathbf{P}}
\def\bfQ{\mathbf{Q}}
\def\bfR{\mathbf{R}}

\def\bfP{\mathbf{P}}
\def\bfU{\mathbf{U}}
\def\bfV{\mathbf{V}}

\def\bfI{\mathbf{I}}
\def\bfL{\mathbf{L}}
\def\bfU{\mathbf{U}}
\def\bfV{\mathbf{V}}

\def\bfSigma{\mbox{\boldmath$\Sigma$}}



\def\bone{\mathbf{1}}
\def\b0{\mathbf{0}}

\def\ba{\mathbf{a}}
\def\bb{\mathbf{b}}
\def\bc{\mathbf{c}}

\def\be{\mathbf{e}}
\def\bh{\mathbf{h}}

\def\br{\mathbf{r}}

\def\bv{\mathbf{v}}

\def\eps{\varepsilon}

\def\bbmat{\begin{bmatrix}[r]}
\def\ebmat{\end{bmatrix}}









\newcommand{\barr}{\begin{array}}
\newcommand{\ea}{\end{array}}
\newcommand{\bea}{\begin{eqnarray}}
\newcommand{\eea}{\end{eqnarray}}
\newcommand{\bt}{\begin{table}}
\newcommand{\et}{\end{table}}

\DeclareMathOperator\argmin{argmin}
\DeclareMathOperator\supp{supp}

\DeclareMathOperator\sign{sign}

\theoremstyle{plain}
\theoremstyle{definition}
\newtheorem{genericthm}{GENERIC THEOREM ENVIRONMENT}[section]

\newtheorem{lem}[genericthm]{Lemma}

\newtheorem{prop}[genericthm]{Proposition}
\newtheorem{notat}[genericthm]{Notation}

\newtheorem{defi}[genericthm]{Definition}

\numberwithin{equation}{section}


\newcommand{\TheTitle}{Transport Reversal for Model Reduction of Hyperbolic Partial Differential Equations}

\begin{document}
  
\title{{\TheTitle}}

\author{Donsub Rim%
  \thanks{Department of Applied Mathematics, University of Washington, Seattle WA 98195 (\email{drim@uw.edu}, \email{smoe@uw.edu}, \email{rjl@uw.edu}).}%
  \and
  Scott Moe\footnotemark[1]
  \and
  Randall J. LeVeque\footnotemark[1]
}

\maketitle

\begin{abstract} 
   Snapshot matrices built from solutions to hyperbolic partial differential 
   equations exhibit slow decay in singular values, whereas fast decay is 
   crucial for the success of projection-based model reduction methods.
   To overcome this problem, we build on previous work in symmetry 
   reduction [Rowley and Marsden, Physica D (2000), pp. 1-19] 
   and propose an iterative algorithm 
   that decomposes the snapshot matrix into multiple shifting profiles, 
   each with a corresponding speed. 
   Its applicability to typical hyperbolic problems is demonstrated through 
   numerical examples, and other natural extensions that modify the shift 
   operator are considered. Finally, we give a geometric interpretation
   of the algorithm.
\end{abstract}

\section{Introduction}

Reduced order models (ROMs) can emulate the behavior of high-dimensional 
models (HDMs) with small computational cost.  
Therefore, once ROMs can be constructed, prohibitively expensive problems 
in control design or uncertainty quantification (UQ) can be tackled by 
using them as surrogate models in place of HDMs.
Proper orthogonal decomposition (POD) and its variants 
\cite{pod02,pod03,pod06,pod07,gappy-pod,pod17,pod18,pod23,pod27}  
have been successfully applied to various partial differential equations (PDEs),
including those arising in fluid dynamics 
\cite{pod01,pod04,pod08,pod15,pod20,pod26}.
However, these projection-based methods can be ineffective 
when applied to compressible flow problems governed by hyperbolic PDEs. 
This difficulty is well-known and was noted 
in \cite{amsallem} where a dictionary-based model reduction 
method was developed, and in \cite{Carlberg15} where a fail-safe $h$-adaptive
algorithm was introduced.

We will illustrate the main obstacle with a simple example.
Consider the initial boundary value problem for the advection equation, 
whose solution $u$ in the domain $\Dom \equiv (0,1)$ satisfies the PDE
\begin{equation}
u_{t} + c {u}_{x}  = 0 \quad \quad \Iin \Dom, 
\label{eq-advection}
\end{equation}
along with the periodic boundary condition and the initial condition
\begin{equation}
u(0,t) = u(1,t) \Ffor t \in [0,T], \quad \quad
u(x,0) = u_0(x) \equiv \delta(x).
\end{equation}
We assume $c = 1$ here. 
Let us seek a solution using the finite volume method (FVM) with 
upwind flux \cite{fvmbook}. We set the grid points
$x_j = jh$ for $j=0,1,...,N$ and let $h \equiv 1/N$,
and define the cells $\cC_j \equiv [x_{j-1/2}, x_{j+1/2}]$
where $x_{j+1/2} \equiv x_j + h/2$.
Denote by $u_j^n$ the approximation to the cell average of 
the solution at time $t_n$
\beq
u_j^n \approx \frac{1}{h}\int_{x_{j-1/2}}^{x_{j+1/2}} u(x,t_n) \dx,
\label{eq-cellavg}
\eeq
and also denote by $\bu^n$ the vector $(u_j^n)_{j=1}^N \in \RR^{N}$.
Taking a fixed time-step of size $\Delta t = h$, 
it is easy to see that the finite volume solution at time $t_n$ is just the 
scaled standard basis vector $\bu^n = \be_{n} / h$. That is,
\beq
    \bu^n = \bfK^n \bu^0,
    \where \quad
    \bfK \equiv \begin{bmatrix} 0 & 0 &  \cdots & 0 & 1 \\
                           1 & 0 &  \cdots & 0 & 0 \\
                            0 & 1 &  \cdots & 0 & 0 \\
    \vdots & \vdots& \ddots & \vdots &\vdots \\
    0 & 0 & \cdots &1 & 0 
            \end{bmatrix} \Aand \bu^0 = \frac{1}{h} \be_1.
\label{eq-ku0}
\eeq
This solution has no error apart from the discretization of the initial data, 
and thereafter reproduces the cell averages \eqref{eq-cellavg} exactly.

The \emph{snapshot matrix} $\bfA$, taken at times $\{t_n\}_{n=0}^{N-1}$, 
is given by
\beq
\bfA \equiv \begin{bmatrix} \bu^0 & \bu^1 & \cdots & \bu^{N-1} \end{bmatrix} 
= \frac{1}{h}\bfI, \where \bfI \text{ is the identity in } \RR^{N\times N}. 
    \label{eq-snapshot} 
\eeq
In POD, we take the 
singular value decomposition (SVD) of the matrix $\bfA$.
It is easy to see that $\bfA$ has singular values 
$\sigma_1 = \cdots = \sigma_N = 1/h$.
We then truncate the rank-$1$ expansion of $\bfA$ after some $R$ terms.
Usually, we choose the smallest $R$ such that, for a given 
tolerance $\eps \ll 1$, the remainder satisfies
\beq
\left. \sum^{N}_{j = R+1} \sigma_j^2 \right/ \sum_{j=1}^{N} \sigma_j^2 < \eps,
    \quad
    \text{ in this case }
    1 - R/N < \eps.
	\label{eq-thresh}
\eeq
The LHS decreases linearly in $R$, so $R$ must be large
even for a moderately small $\eps$ to satisfy \eqref{eq-thresh}. 
If $\eps < h$ it would require
$R=N$, so that all singular values and corresponding singular vectors 
must be kept as reduced basis vectors.
(Often $\bfA$ is preprocessed by subtracting from each column its mean,
so that each column has zero mean. Doing so here would make $\sigma_N$ equal to 
zero, but other singular values will not be changed, leaving the obstacle
intact.)

This slow decay is commonly observed in snapshot matrices taken from
hyperbolic problems. Therefore, existing projection-based methods 
quickly face a difficulty. The approach we adopt to overcome this 
is to focus on the low-dimensional \emph{hyperbolic} behavior of the 
solution and treat it separately, to the extent possible. 
Simply put, we wish to construct the Lagrangian frame of reference.  
To do so directly is a challenging problem of its own right, so instead 
we devise a numerical method for utilizing this frame indirectly
for our special purposes.

This main idea coincides with the so-called \emph{symmetry reduction} 
that was studied in \cite{marsden2, marsden1} and similar ideas
that appeared in the references therein.
The target for reduction in that context is a continuous symmetry group $G$ 
acting on a manifold $M$. In our setting, $M$ is the $L^2$ inner product space 
of periodic functions, and $G$ is the group of spatial translations.
To reduce $G$, \emph{template fitting} \cite{kirby} is used to map the full 
dynamics of $u$ to the quotient space $M/G$.
Given a snapshot $u(x,t)$ at time $t$ and a \emph{template} $u_0(x)$,
both periodic in $[0,1]$, template fitting posits the minimization problem
\beq
\min_a \int_0^{1} \abs{u(x-a,t) - u_0(x)}^2 \dx.
\label{eq-contimin}
\eeq
This minimization resembles the orthogonal Procrustes problem 
\cite{gower2004procrustes}, which deals with data given in the form of 
sample points, rather than in discretized function values over a grid.
If $u$ is smooth, one obtains the equation for the minimum $a_*$,
$\brkt{u(x,t)}{u_0'(x+a_*)}= 0$, that defines the dynamics of $a_*(t)$.
The orthogonality condition then allows one to identify the quotient space 
$M/G$ with an affine space intersecting $u_0$ called a \emph{slice} denoted 
by $S_{u_0}$. To summarize, for each given dynamics $u(t)$ in $M$,
corresponding \emph{slice dynamics} $r(\tau)$ in $S_{u_0}$ can be found. 
After a reduction for $r(\tau)$ is found in $S_{u_0}$, there are 
reconstruction equations that can be used to recover 
the original dynamics $u(t)$ \cite{marsden1}. 
The main advantage is that $r(\tau)$ in the space $S_{u_0}$ may yield
low-dimensional structure more readily, even when $u(t)$ itself does not.
This key property is the inspiration for this work.

Following the template fitting approach, we propose generalizations which 
expand its applicability.
In \cite{marsden1} the dynamics of the infinitesimal action for the 
reconstruction were formulated, and then the system was integrated numerically. 
Here we consider the direct discretization of \eqref{eq-contimin}, then
devise a greedy algorithm we call the \emph{transport reversal}.
The algorithm terminates when the snapshot matrix can be well-approximated
by the superposition of multiple transport dynamics.
The main ideas are 
(1) the projection onto the \emph{template} or the \emph{pivot} for scaling, 
(2) the use of \emph{cut-off vectors} to modify the pivot, and 
(3) enforcing of regularity in the minimization problem to obtain 
    smooth transport dynamics. 
The details appear in Section \ref{sec:rev}.

In the subsequent sections, we consider two extensions of the shift operator. 
The upwind flux is used to extend the shift numbers to real numbers
in Section \ref{sec:upwind}. Then an extension to the case where
the speed $c$ in \eqref{eq-advection} varies with respect to the spatial 
variable is introduced in Section \ref{sec:varlin}.
Nothing prevents these extensions from being used in conjunction with the 
iterative transport reversal algorithm introduced in the preceding section.
In Section \ref{sec:geo}, we present some geometric interpretations.

Transport reversal shares features with the shifted proper 
orthogonal decomposition (sPOD) introduced in \cite{Schulze15}.
It can be related to the dynamic mode decomposition (DMD) \cite{dmd1,dmd2} 
in the sense that the periodic shift operator in \eqref{eq-ku0} 
is a linear operator generating the dynamics on the state space $\RR^N$,
but the objectives differ. Here we assume that a specific dynamic, 
namely transport, is present in the data, whereas DMD aims to discover 
the spectral properties of the Koopman operator derived from the data itself. 

The discovery of the hyperbolic structure through this algorithm
is only a first step towards building a ROM for hyperbolic PDEs.
Using this output to build a ROM requires tackling further issues
that will be pursued in future work.
Once ROMs can be constructed for any parameter value, they can be used 
to explore the solution behavior in parameter space. 
In many practical applications the parameter space is 
high dimensional, so one needs a strategy for constructing a global model 
that is not sensitive to the number of dimensions. 
To this end, various interpolation methods incorporating adaptive and greedy 
strategies have been introduced \cite{param5,param2,param3,param1,param4}.
The algorithms in this paper allows one to apply these methods in 
conjunction with \emph{displacement interpolation} 
(see, e.g., \cite{villani2008optimal}) thereby incorporating the 
Lagrangian frame into the approximation procedure. 
The approach given here may well supplement not only existing
model reduction methods, but also UQ methods such as 
the generalized Polynomial Chaos (gPC) \cite{Pulch2011}.

The transport reversal extends naturally to the multidimensional setting.
The key component in the extension is the use of the intertwining property of
the Radon transform \cite{Helgason2011}. In exploiting this remarkable
property, one obtains a multidimensional extension of the large time-step
method \cite{largetimestep1, largetimestep3, largetimestep2}, and therefore
the multidimensional analogue of the transport reversal algorithm.
The scope of this paper does not permit a detailed account of this 
important extension. A thorough treatment will appear elsewhere, based
on the one-dimensional algorithm presented in this paper.

\section{Transport reversal}\label{sec:rev}

In this section, we discretize and generalize the problem \eqref{eq-contimin} and
then introduce the transport reversal algorithm. 
To motivate the discussion, let us revisit the problem \eqref{eq-advection}.
Recall the finite volume solution $\bu^n$, the matrix of shifts with periodic
boundary conditions $\bfK$ \eqref{eq-ku0}, and the snapshot matrix  
$\bfA$  \eqref{eq-snapshot}.
With this notation, the columns of $\bfA$ can be rewritten in terms of 
the Krylov subspace generated by $\bfK$ in using the fact that 
$\bu^n = \bfK^n \bu^0$,
\[
\bfA = \begin{bmatrix}
        \bu^0 & \bfK \bu^0 & \cdots & \bfK^{N-1} \bu^0
        \end{bmatrix}.
\]
Suppose we preprocess $\bfA$ to obtain $\mathring{\bfA}$,
\beq
    \mathring{\bfA} \equiv 
\begin{bmatrix} \bfI & \bfK^{-1} & \bfK^{-2} 
& \cdots & \bfK^{-(N-1)} \end{bmatrix} \odot 
\begin{bmatrix} \bu^0 & \bu^1 & \bu^2 
& \cdots & \bu^{N-1} \end{bmatrix}\label{eq-const_reverse}
\eeq
where the notation $\odot$ denotes component-wise multiplication between 
a list of matrices and a list of column vectors.
It follows that
\beq
\mathring{\bfA}  = \begin{bmatrix} \bu^0 &  \cdots & \bu^0  \end{bmatrix} 
           = 
           \begin{bmatrix} \frac{1}{h} & \frac{1}{h} &  \cdots & 
           \frac{1}{h} \\
                    0 & 0 & \cdots & 0 \\
                \vdots & \vdots & \ddots & \vdots \\
                0 & 0 & \cdots & 0 
            \end{bmatrix}
            =
           \begin{bmatrix} N & N &  \cdots & 
           N \\
                    0 & 0 & \cdots & 0 \\
                \vdots & \vdots & \ddots & \vdots \\
                0 & 0 & \cdots & 0 
            \end{bmatrix},
\eeq
and also that $\mathring{\bfA}$ has the trivial SVD
\beq
    \mathring{\bfA} = \bfU \bfSigma \bfV^*  = 
    \begin{bmatrix} 1 \\ 0 \\ \vdots \\ 0 \end{bmatrix}
    \begin{bmatrix}
        N\sqrt{N} 
    \end{bmatrix}
    \begin{bmatrix}
        \frac{1}{\sqrt{N}} & \frac{1}{\sqrt{N}} & \cdots & \frac{1}{\sqrt{N}} 
    \end{bmatrix}.
	\label{eq-single}
\eeq
Hence the singular values of $\mathring{\bfA}$ are 
$\sigma_1 = N\sqrt{N}$ and  $\sigma_2 = \cdots = \sigma_N = 0.$
In short, when SVD is applied to $\mathring{\bfA}$
rather than $\bfA$ there is only one nonzero singular value,
yielding a reduced basis with a single element $\{\bu_0\}$.
By shifting each snapshot by an appropriate number of grid cells (reversing the
transport due to the hyperbolic equation) they all line up. 
This procedure can be seen as a straightforward discretization of 
\eqref{eq-contimin}, and we formulate its generalization as follows.
\begin{defi}[Shift numbers]\label{def:tr}
    Let $\bfA \in \RR^{N \times M}$ be a real matrix and $\bb \in \RR^N$
    a real vector we will call the \emph{pivot}.
    Denote by $\ba_j$ the $j$-th column of $\bfA$, then define
    the integers $\nu_j \in \ZZ_N$ to be the minimizers
    \beq
    \nu_j = \argmin_{\omega \in \ZZ_N} 
    \left\lVert  \ba_j - \bfK^{\omega} \bb \right\rVert_2^2
    \Ffor j=1,2, ... , M.
    \label{eq-bnumin}
    \eeq
    Whenever the minimization is not unique, we choose one closest to 0.

    We call $\{\nu_j\}$  the \emph{shift numbers} and organize them in a vector 
    $\bnu \equiv (\nu_j)_{j=1}^M$. 
    We denote the computation of $\bnu$ in \eqref{eq-bnumin} as
    \beq
    \bnu = \cC (\bfA; \bb).
    \eeq
\end{defi}
In \eqref{eq-bnumin} we are merely
shifting the entries of the pivot $\bb$ to match $\ba_j$ as much as possible.
Here we introduce some more notations regarding the computation $\cC$.
\begin{notat} 
    Pivot operations.
    \begin{itemize}
        \item Let $\cC(\bfA ; j) \equiv \cC(\bfA; \ba_j)$,
            when the pivot is a column of $\bfA$.
        \item For $\bfB \in \RR^{N \times N}$, let 
            $(\cC(\bfA ; \bfB))_j \equiv \cC(\ba_j, \bb_j)$. 
            That is, in case $\bb$ in \eqref{eq-bnumin} depends on the column index 
            $j$ so that the pivot is allowed to change for each column, we 
            supply the matrix $\bfB \in \RR^{N \times M}$ to indicate that 
            its $j$-th column $\bb_j$ will be used as the pivot for 
            computing $\nu_j$.
        \item Given $\ell : \{1, ..., M\} \to \{1, ..., M\}$, 
            let $(\cC(\bfA; \ell))_j \equiv \cC(\ba_j; \ba_{\ell(j)})$. 
            We define a \emph{pivot map} $\ell$
            that designates the pivot for each column, and supply it to $\cC$.
    \end{itemize}
\end{notat}
The shift numbers $\bnu$ contain the information on how many entries
each columns of the matrix should be shifted. So $\bnu$ describes a transport
operation to be acted on each column, which will be summarized 
in the operator defined below.

\begin{defi}[Transport with periodic boundary conditions]
    Given a matrix $\bfA \in \RR^{N \times M}$ and a vector of shift numbers 
    $\bnu \in \RR^M$, the \emph{transport with periodic boundary conditions}
    $\cT$ is defined as
    \beq
        \cT(\bfA; \bnu) 
        \equiv \begin{bmatrix}
            \bfK^{\nu_1} & \bfK^{\nu_2} & \cdots & \bfK^{\nu_M}
        \end{bmatrix}
        \odot
        \begin{bmatrix}
            \ba_1 & \ba_2 & \cdots & \ba_M
        \end{bmatrix}.
    \eeq
    If a vector $\bb \in \RR^N$ is given instead of a matrix, we let
    \beq
        \cT(\bb; \bnu) 
        \equiv \begin{bmatrix}
            \bfK^{\nu_1}\bb & \bfK^{\nu_2}\bb & \cdots & \bfK^{\nu_M} \bb
        \end{bmatrix}.
    \eeq
\end{defi}
It is easy to see that $\cT(\cdot; \bnu)$ and $\cT(\cdot: -\bnu)$ 
are exact inverses of each other. That is, for fixed $\bnu$, 
\beq
\cT( \cT(\bfA; -\bnu) ; \bnu) = \bfA \Ffor \bfA \in \RR^{N \times M}.
\label{eq-Tinv}
\eeq

The key observation in the example above \eqref{eq-single} is that 
the SVD of $\cT(\bfA; -\bnu)$ with $\bnu = \cC(\bfA; \ba_1)$
results in faster decay in singular values than that of $\bfA$.
(The dynamics in $\cT(\bfA;-\bnu)$ represents the reduced dynamics $r(\tau)$
in symmetry reduction.)
Therefore one approximates $\cT(\bfA; -\bnu)$ by a low-rank
representation $\tilde{\bfA}$ via the usual truncation of rank-$1$ expansion. 
 If we apply the forward transport to $\tilde{\bfA}$,
that is, compute $\cT(\tilde{\bfA}; \bnu)$, it will be a better 
approximation of $\bfA$ compared to the direct low-rank approximation of $\bfA$.
This idea has been illustrated also in \cite{Schulze15}.
The effectiveness of this approach, along with extensions will be discussed
further in Section \ref{sec:extshift}.

Unfortunately, template fitting has several important drawbacks. 
We will demonstrate them through typical examples of hyperbolic PDEs. 
Suppose the given matrix $\bfA$ is a snapshot matrix
from the following four hyperbolic problems. Diagrams visually illustrating 
the solution behavior are shown in Figure \ref{fig-fourprobs}, with respective 
enumeration.

\begin{enumerate}
    \item[(P1)]\label{it-p3}
        Advection equation with source term,
        \beq
            u_t + u_x = - \gamma u \Iin (0,1),
            \quad \text{ with } \gamma > 0,\label{eq-advsrc}
        \eeq
        where $u(x,0)$ is a non-negative density pulse.
        The pulse diminishes in height over time,
        and this decrease cannot be well represented by translation
        alone. This is an inherent limitation of \eqref{eq-bnumin}. 
    \item[(P2)] \label{it-p2}
        Advection equation 
        \beq
        u_t + u_x = 0 \Iin (0,1), \label{eq-adv}
        \eeq
        with absorbing boundary conditions, $u_x = -u_t$ at 
        the right boundary $x = 1$, and $u(x,0)$ a density pulse.
        In \eqref{eq-bnumin} periodic shift $\bfK$ assumes periodic boundary
        conditions, so there is little hope of capturing this absorption.
    \item[(P3)] \label{it-p1}
        Acoustic equations in a homogeneous medium,
        \beq
        \begin{bmatrix} p \\ u  \end{bmatrix}_t
            +
        \begin{bmatrix} 0 & K_0 \\ 1/\rho_0 & 0 \end{bmatrix}
        \begin{bmatrix} p \\ u \end{bmatrix}_x = 0
            \Iin (0,1) \label{eq-ahm}
        \eeq
        with periodic boundary conditions and the initial conditions
        in which $p(x,0)$ is an acoustic pulse and $u(x,0) = 0$.
        For the state variable $p$, the initial profile splits into two, both scaled
        by half, and propagates in opposite directions. A single minimization
        problem \eqref{eq-bnumin} cannot be used to represent the
        two different speeds. 
    \item[(P4)] \label{it-p4}
        Burgers' equations
        \beq
            u_t + u u_x = 0 \Iin (0,1), \label{eq-burg}
        \eeq
        again with a density pulse as the initial condition.
        When the initial profile changes shape dramatically,
        translation alone cannot yield a good approximation. 
\end{enumerate}

In this section, we address these issues by the generalization of 
the operators $\cC$ and $\cT$, in which we add new features
to template fitting procedure \eqref{eq-bnumin}. 
Each of these features are introduced one by one in Sections \ref{ssec-proj}, 
\ref{ssec-cutoff}, \ref{ssec-greedy} and 
\ref{ssec-regular}. 
Algorithm \ref{alg-tr}
describes the final iterative algorithm. 

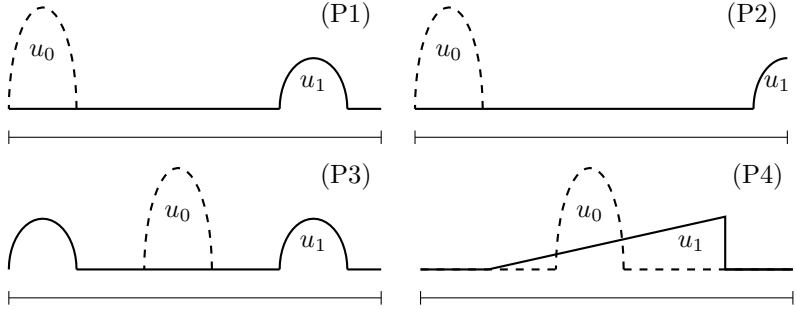
\begin{figure}
\centering
\begin{tabular}{cc}
\begin{tikzpicture}[scale=0.9]
    \draw (5.5,1.8) node[anchor=east]{(P1)};
\draw[black] (0.5, 1.25) node {$u_0$};
\draw (0,-.1) -- (0, .1);
\draw (0,0) -- (5.5,0);
\draw (5.5,-.1) -- (5.5, .1);
\draw[black,thick] (0.,.42) -- (4.,.42);
\draw[black,thick] (5.,.42) -- (5.5,.42);
\draw [black,dashed,thick,domain=0:180] 
plot ({0.5 + .5*cos(\x)}, {.42 + 1.5*sin(\x)});
\draw [black,thick,domain=0:180] plot ({4.5 + .5*cos(\x)}, {.42 + .75*sin(\x)});
    \draw[black] (4.5, .8) node {$u_1$};
\end{tikzpicture}
&
\begin{tikzpicture}[scale=0.9]
    \draw (5.5,1.8) node[anchor=east]{(P2)};
\draw[black] (0.5, 1.25) node {$u_0$};
\draw (0,-.1) -- (0, .1);
\draw (0,0) -- (5.5,0) ;
\draw (5.5,-.1) -- (5.5, .1);
\draw[black,thick] (0.,.42) -- (5.,.42);
\draw [black,dashed,thick,domain=0:180] 
plot ({0.5 + .5*cos(\x)}, {.42 + 1.5*sin(\x)});
\draw [black,thick,domain=90:180] plot ({5.5 + .5*cos(\x)}, {.42 + .75*sin(\x)});
\draw[black] (5.35, .8) node {$u_1$};
\end{tikzpicture}
\\ 
\begin{tikzpicture}[scale=0.9]
    \draw (5.5,1.8) node[anchor=east]{(P3)};
\draw[black] (2.5, 1.25) node {$u_0$};
\draw (0,-.1) -- (0, .1);
\draw (0,0)  -- (5.5,0) ;
\draw (5.5,-.1) -- (5.5, .1);
\draw[black,thick] (1.,.42) -- (4.,.42);
\draw[black,thick] (5.,.42) -- (5.5,.42);
\draw [black,dashed,thick,domain=0:180] plot ({2.5+ .5*cos(\x)}, {.42 + 1.5*sin(\x)});
\draw [black,thick,domain=0:180] plot ({4.5 + .5*cos(\x)}, {.42 + .75*sin(\x)});
\draw [black,thick,domain=0:180] plot ({0.5 + .5*cos(\x)}, {.42 + .75*sin(\x)});
\draw[black] (4.5, .8) node {$u_1$};
\end{tikzpicture}& 
\begin{tikzpicture}[scale=0.9]
    \draw (5.5,1.8) node[anchor=east]{(P4)};
\draw[black] (2.5, 1.25) node {$u_0$};
\draw (0,-.1) -- (0, .1);
\draw (0,0)  -- (5.5,0) ;
\draw (5.5,-.1) -- (5.5, .1);
\draw[black,thick,dashed] (0.,.42) -- (2.0,.42);
\draw[black,thick,dashed] (3.,.42) -- (5.5,.42);
\draw [black,dashed,thick,domain=0:180] plot ({2.5+ .5*cos(\x)}, {.42 + 1.5*sin(\x)});
\draw[black] (4.0, .8) node {$u_1$};
\draw[black,thick] (0.,.42) -- (1.0,.42);
\draw[black,thick] (4.5,.42) -- (5.5,.42);
\draw[black,thick] (1.0,.42) -- (4.5,1.2) -- (4.5,.42);
\draw[black,thick] (4.5,.42) -- (5.5,.42);
\end{tikzpicture}
\end{tabular}
\caption{Illustration of solution behavior for the four hyperbolic problems
(P1), (P2), (P3) and (P4). $u_0$ denotes the initial profile, drawn in dashed
lines, and $u_1$ denotes the solution at some future time, in solid lines.}
\label{fig-fourprobs}
\end{figure}

\subsection{Projection of pivot}\label{ssec-proj} Consider the situation in the problem (P1) above,
where the initial profile diminishes in height with time, while being transported at
constant speed. This is illustrated in Figure \ref{fig-fourprobs} (P1). 
The minimization problem in \eqref{eq-bnumin} does not take the scaling into 
account. We introduce a scaling by projecting the $j$-th 
column onto the pivot.
The projection $\cP$ is defined as
\beq
\cP(\ba_j ; \bb) \equiv 
\begin{cases}
    \dsp{\frac{\bb \bb^T}{\lVert \bb \rVert^2} \ba_j} & \text{ if } \lVert \bb \rVert > 0,\\
    \mathbf{0} & \text{ otherwise, if } \lVert \bb \rVert = 0. 
\end{cases}
\label{eq-projdef}
\eeq
Now, we replace the functional in the minimization problem \eqref{eq-bnumin}
by measuring the difference between the $j$-th column $\ba_j$ and the 
transported-and-projected vector $\cP(\ba_j ; \bfK^\omega \bb)$. 
That is, we solve the minimization problem
\beq
\nu_j = \argmin_{\omega \in \ZZ_N} \left\lVert
            \ba_j - \cP(\ba_j ; \bfK^{\omega} \bb) 
            \right \rVert_2^2.
            \label{eq-bnumin-proj}
\eeq
We denote this computation of the shift numbers in a concise form,
\beq
\bnu = \cC( \bfA; \bb, \cP),
\eeq
by supplying the projection map $\cP$.
The scaling \eqref{eq-projdef} must also be stored, and we organize it in the
vector $\bh$,
\beq
h_j = \cP(\ba_j; \bfK^{\nu_j} \bb)
\quad \text{ and } \quad
\bh = \begin{bmatrix} h_1 & \cdots & h_M \end{bmatrix}.
\eeq
We denote this concisely by writing $\bh = \cP(\bfA; \bb,\bnu)$.

The operator $\cT$ must also be generalized to take into account the 
scaling $\bh$,
\beq
\cT(\bfA; \bnu, \bh)
\equiv \begin{bmatrix} 
    h_1 \bfK^{\nu_1} & \cdots & h_M \bfK^{\nu_M}
    \end{bmatrix}
    \odot
    \begin{bmatrix}
        \ba_1 & \cdots & \ba_M
    \end{bmatrix}.
\eeq
The vector version of $\cT$ is modified similarly,
\beq
\cT(\bb; \bnu, \bh)
\equiv \begin{bmatrix} 
    h_1 \bfK^{\nu_1}\bb & 
    h_2 \bfK^{\nu_2}\bb &\cdots & 
    h_M \bfK^{\nu_M} \bb
    \end{bmatrix}.
\eeq

\subsection{Cut-off vectors}\label{ssec-cutoff} In (P2) we encounter a wave 
profile that is absorbed at the right boundary. 
If we were to apply the minimization problem with projection 
\eqref{eq-bnumin-proj}, the vanishing pulse would be partly represented  
by a translating profile that is decreasing in height. 
Still, some part of the profile will hit the boundary to the right,
and since $\bfK$ assumes a periodic boundary condition, 
the profile will appear at the left boundary as well. 
While one can modify $\bfK$ to account for this behavior, this will 
cause significant changes in the minimization problem \eqref{eq-bnumin},
and the existence of the exact inverse in \eqref{eq-Tinv} may be lost.
Moreover, another problem arises in the Burgers' equation (P4). 
The initial pulse is deformed to the extent that its transported profile 
may not represent the shape of the shock wave adequately, even with scaling.

As a step towards remedying both issues, we introduce the cut-off 
(or support) vector $\brho$.
Roughly speaking,
$\brho$ will designate the location of the domain where the projection
$\cP(\ba_j; \bfK^\omega \bb)$, for given $\omega \in \ZZ_N$,
is a good approximation of $\ba_j$.
We will denote by $\cS$ the operator that yields the cut-off, for given
two column vectors $\ba_j$ and $\bb$.
To be more specific, the $i$-th component $\rho_i$ of $\brho$ is
defined as
\beq
\rho_i =  \left( \cS(\ba_j ; \bb) \right)_i
\equiv 
\begin{cases}
    1 & \text{ if } \sign(a_{ij} - b_i) \cdot \sign(a_{ij}) \ge 0 \\
    & \quad \quad \quad 
    \text{ and } \abs{a_{ij} - b_i}
    \le \abs{a_{ij}}, \\
    0 & \text{ otherwise.}
\end{cases}
\label{eq-cS}
\eeq
The intention is to use $\rho_i b_i$ to approximate $a_i$.
The first condition $\sign(a_{ij} - b_i) \cdot \sign(a_{ij}) \ge 0$
ensures that the cut-off pivot does not overshoot the profile,
and the second condition $\abs{a_{ij} - b_i} \le \abs{a_{ij}}$ makes sure that 
the approximation has the same sign as the original vector. 

We will project $\ba_j$ onto $\bb$ for scaling before we apply $\cS$. 
That is, $\cP(\ba_j; \bb)$ will be input above in \eqref{eq-cS} in place of $\bb$. 
To simplify the notation, we will use the shorthand
\beq
\cS(\ba_j ; \bb, \cP) \equiv \cS(\ba_j; \cP(\ba_j; \bb)).
\eeq
Now the minimization \eqref{eq-bnumin-proj} is further updated:
we shift $\bb$ and scale it using the projection $\cP$, and we cut-off
using $\cS$, then we compare with $\ba_j$.
The new minimization problem becomes,
\beq
\nu_j 
= \argmin_{\omega \in \ZZ_N}
    \left \lVert \ba_j - 
    \cS(\ba_j ; \bfK^\omega \bb, \cP)
    \odot
    \cP(\ba_j; \bfK^\omega \bb) \right\rVert_2^2.
\label{eq-bnumin-cutoff}
\eeq
Here $\odot$ denotes the component-wise multiplication between two vectors in $\RR^N$.
As before, we define a shorter notation 
for this computation of shift numbers,
\beq
\bnu = \cC(\bfA; \bb, \cP, \cS).
\eeq
Furthermore, this generalization makes it necessary to store the vectors $\brho_j$
corresponding to each $\nu_j$, that is,
\beq
\brho_j = \cS(\ba_j ; \bfK^{\nu_j} \bb, \cP).
\eeq
We store these as columns of the matrix
\beq
\bfRho = \begin{bmatrix} \brho_1 & \cdots & \brho_M \end{bmatrix},
\eeq
and we also write $\bfRho = \cS(\bfA; \bb, \bnu, \cP)$.
In implementing the algorithm $\bfRho$ is computed simultaneously with $\bnu$,
but we will keep this implicit notation. $\bfRho$ can be stored as an 
array of Boolean data-type, so the storage requirement is not significant. 
It can be even more reduced should the pivot $\bb$ be 
sparse, but the details will not be pursued here.

Finally, the transport operator $\cT$ must also be extended to incorporate 
the cut-off function, which we can do by letting
\beq
\begin{aligned}
\cT(\bfA; \bnu, \bh, \bfRho)
&\equiv \begin{bmatrix} 
    h_1 \brho_1 \odot \bfK^{\nu_1} & \cdots & h_M \brho_M \odot \bfK^{\nu_M}
    \end{bmatrix}
    \odot
    \begin{bmatrix}
        \ba_1 & \cdots & \ba_M
    \end{bmatrix}\\
    &= \begin{bmatrix} 
    h_1 \brho_1 \odot \bfK^{\nu_1} \ba_1 & \cdots & 
    h_M\brho_M \odot \bfK^{\nu_M} \ba_M\end{bmatrix}.
\end{aligned}
\eeq
and the vector version $\cT(\bb; \bnu, \bh, \bfRho)$ is defined similarly.
Now we are ready to combine these computations in a greedy iteration.

\subsection{Greedy iteration and pivoting}\label{ssec-greedy}
Recall that the rank-1 expansion arising from the SVD can be seen as an 
iterative procedure in which a greedy rank-1 update is made in each iteration.
Here we define a similar update for the minimizations above, 
by attempting to capture transport structure iteratively. 
The necessity of multiple iterations
can be illustrated by the acoustic equation (P3). The initial pulse
splits into two and travels at two different speeds.
In this case the speeds have equal magnitude with opposite sign, 
as sketched in Figure 1 (P3).
However, they could be of the same sign and may also vary with time.
One minimization problem using $\cC$ and $\cT$ defined above cannot approximate
this behavior adequately.
Furthermore, in the Burgers' equation (P4) the profile is deformed heavily,
so that transporting one pivot once, even with projections and cut-offs, 
cannot capture the substantial change in shape. 

Therefore, we iterate on the previously defined computations as follows.
First, let $\bfR_1 \equiv \bfA$ and choose a pivot $\bb_1$, say 
the first column $\ba_1$ of $\bfA$. 
$\bfR_k$ will denote the residual, and index $k$ will be used 
for the iteration number.
We compute the shift numbers $\bnu_1$,
the scaling $\bh_1$ and the cut-offs $\bfRho_1$,
\beq
\bnu_1 \equiv \cC(\bfR_1; \bb_1, \cP, \cS) ,
\quad
\bh_1 \equiv \cP(\bfR_1; \bb_1, \bnu_1) 
\quad \text{ and } \quad
\bfRho_1 \equiv \cS(\bfR_1; \bb_1, \bnu, \cP).
\eeq
Now, we subtract off the first rough approximation from the snapshots,
\beq
\bfR_2 \equiv \bfR_1 - \cT(\bb_1; \bnu_1, \bh_1, \bfRho_1).
\eeq
This forms one iteration. 
We remark that the conditions in \eqref{eq-cS} prevent $\bfR_2$ from 
developing oscillations.

Next, we compute $\bnu_2, \bh_2$ and $\bfRho_2$ by 
replacing $\bfR_1$ above by $\bfR_2$.
We repeat, so that all transport patterns using the pivot $\bb_1$
are removed from the data. This reaches a point of diminishing return
after some iterations,
and we monitor the progress at the $k$-th iteration by computing the ratio 
$\lVert \bfR_k \rVert / \lVert \bfR_{k-1} \rVert$. One may set
a threshold $\tau_1$ so that 
\beq
\text{ if }
\frac{\lVert \bfR_k \rVert_F}{\lVert \bfR_{k-1} \rVert}_F > \tau_1,
\text{ then update the pivot } \bb_\ell \text{ to } \bb_{\ell+1}.
\eeq
There are many different options in choosing the next pivot $\bb_{\ell+1}$.
For example, one may proceed to a pivot that is orthogonal to the previous pivot.
Here we simply choose $\bb_\ell = \br_{\ell,k}$ where $\br_{\ell,k}$ 
is the $\ell$-th column of $\bfR_k$.

The algorithm halts when $\lVert \bfR_k \rVert_F < \tau_0$ for a
given tolerance $\tau_0$.

Let us organize the shift numbers $\bnu_k$ at each iteration in  $\bfV$,
\beq
\bfV \equiv \begin{bmatrix} \bnu_1 & \cdots & \bnu_K \end{bmatrix}.
\label{eq-bfV}
\eeq
where $K$ denotes the index of the last iteration.
We do the same for the cut-offs $\bfP_k$ and collect them in $\bfQ$,
\beq
\bfQ \equiv \begin{bmatrix} \bfP_1 & \cdots & \bfP_K \end{bmatrix},
\eeq
then similarly collect $\bh_k$ in $\bfH$,
\beq
\bfH \equiv \begin{bmatrix} \bh_1 & \cdots & \bh_K \end{bmatrix}.
\eeq

\subsection{Regularization of shift numbers} \label{ssec-regular}
The shift numbers $\bnu_k$ encode the transport motion of a profile 
over time, and we expect the speed of the transport to be relatively smooth. 
While the greedy iteration may yield a good approximation to $\bfA$,
the components of $\bnu_k$ may vary wildly.
Hence it is reasonable to enforce some regularity when computing $\bnu_k$.
That is, the shift number should change smoothly over time.
We achieve this by adding a penalty term in the minimization problem
\eqref{eq-bnumin-cutoff} when $j > 1$ to try to keep $|\nu_{(j-1)k}-\nu_{jk}|$
small:

\beq
\nu_{jk} = \argmin_{\omega \in \ZZ_N}
\left \lVert \ba_j - \cS(\ba_j ; \bfK^\omega \bb, \cP)
\odot \cP(\ba_j; \bfK^\omega \bb) \right \rVert_2^2
+ \lambda \abs{\omega - \nu_{(j-1)k}}^2,
\label{eq-bnumin-penalty}
\eeq
with a regularization parameter $\lambda$.
It may be desirable to add additional higher-order regularity terms, 
that is, second order finite difference term for $\bnu_k$.
Other penalty terms regarding the regularity of $\bfP_k$ as well 
as $\bh_k$ can be summed into \eqref{eq-bnumin-penalty} also.

The regularization is crucial, since the smooth evolution of
the shift numbers across snapshots is needed for 
\emph{displacement interpolation} in the sense used in optimal transport
(see, e.g., \cite{villani2008optimal}),
which effectively tracks the transport structure.
Note also that we may encode the smooth evolution efficiently
by polynomial interpolation or regression. 
This could be taken into consideration much earlier on, when 
the snapshots are generated: one may store snapshots at Chebyshev grid points
in the time variable, to facilitate accurate interpolation, then enforce high 
regularity in the shift numbers. 

The output of the regularized version of the algorithm can also be viewed 
as a greedy solution to an optimal transport problem, where one seeks to 
minimize the cost of transporting an initial state to the final state over 
admissible transport maps. The cost function here is particularly simple 
and is given by the regularization terms, for example the term penalizing
the total displacement in the case of \eqref{eq-bnumin-penalty}.

A simplified pseudo-code of the transport reversal is given in
Algorithm \ref{alg-tr}.

\begin{algorithm}[h]
\caption{Transport reversal algorithm}\label{alg-tr}
\begin{algorithmic}[1]
    \Procedure{TR}{$\bfA,K,\tau_0,\tau_1$}
    \Statex{}\Comment{input matrix $\bfA$, max. no. of iterations $K$, and tolerances $\tau_0, \tau_1$}
\State{$\ell \gets 1$}\Comment{pivot number}
\State{$k \gets 0$}\Comment{iteration count}
\State{$r_{\text{old}} \gets \Norm{\bfA}{F}$}
\State{$\bfR \gets \bfA$}\Comment{initialize residual $\bfR$}
\State{$\bb_\ell \gets \bfR(:,\ell)$}
\Comment{choose first column of $\bfR$ as pivot}
\While{($r_\text{old} > \tau_0 $ and $k \le K$)}
\State{$k \gets k + 1$}
\State{$(\bnu_k,\bh_k,\bfP_k) \gets (\cC(\bfR; \bb_\ell,\cP,\cS),
\cP(\bfR; \bb_\ell, \bnu_k),\cS(\bfR; \bb_\ell, \bnu_k,\cP)) $}
\Statex{}\Comment{computation is done concurrently}
\State{$\bfR \gets \bfR - \cT(\bb_\ell; \bnu_k, \bh_k, \bfRho_k)$}
\State{$r_\text{new} \gets \Norm{\bfR}{F}$}
\If{$r_\text{new} / r_\text{old} > \tau_1$}
\State{$\ell \gets \ell + 1$}\Comment{pivoting}
\State{$\bb_\ell \gets \bfR (:,\ell)$}\Comment{update pivot to be the 
                    $\ell$-th column of the new $\bfR$}
\EndIf
\State{$r_\text{old} \gets r_\text{new}$}
\EndWhile\label{euclidendwhile}
\State{$\bfB \gets [\bb_1, \cdots, \bb_\ell]$}
\State{$\bfV \gets [\bnu_1, \cdots, \bnu_k]$}
\State{$\bfH \gets [\bh_1, \cdots, \bh_k]$}
\State{$\bfQ \gets [\bfP_1, \cdots, \bfP_k]$}
\State{\textbf{return} $\bfB,\bfV, \bfH, \bfQ$}
\Statex{}\Comment{output pivots, shift numbers, scalings, and cut-offs}
\EndProcedure
\end{algorithmic}
\end{algorithm}

\subsection{Numerical example for transport reversal}

Here we apply the transport reversal algorithm to two of 
the problematic scenarios
given above: the acoustic equation (P3) and the Burgers' equation (P4).
The tolerances and regularization parameters are chosen rather
heuristically. $\lambda$ 
is set adaptively according to the variation of the functional 
in the minimization problem without the penalty terms \eqref{eq-bnumin-cutoff}:
we set $\lambda$ in \eqref{eq-bnumin-penalty} as
$2.5/(CN)$ where
\beq
\begin{aligned}
    C \equiv \max_{\omega \in \ZZ_N}  & \left \lVert \ba_j - \cS(\ba_j ; \bfK^\omega \bb, \cP)
\odot \cP(\ba_j; \bfK^\omega \bb) \right \rVert_2^2 \\
& \quad - \min_{\gamma \in \ZZ_N}\left \lVert \ba_j - \cS(\ba_j ; \bfK^\gamma \bb, \cP)
\odot \cP(\ba_j; \bfK^\gamma \bb) \right \rVert_2^2.
\end{aligned}
\eeq
The $L^2$-norm used for measuring the error here refers to the 2 dimensional
$L^2$-norm over spatial and temporal variables, 
$\lVert \cdot \rVert_F / \sqrt{NM}$ for a matrix in $\RR^{N \times M}$.

%
%

\begin{figure}
    \begin{tabular}{cc}
        \includegraphics[width=0.46\textwidth]{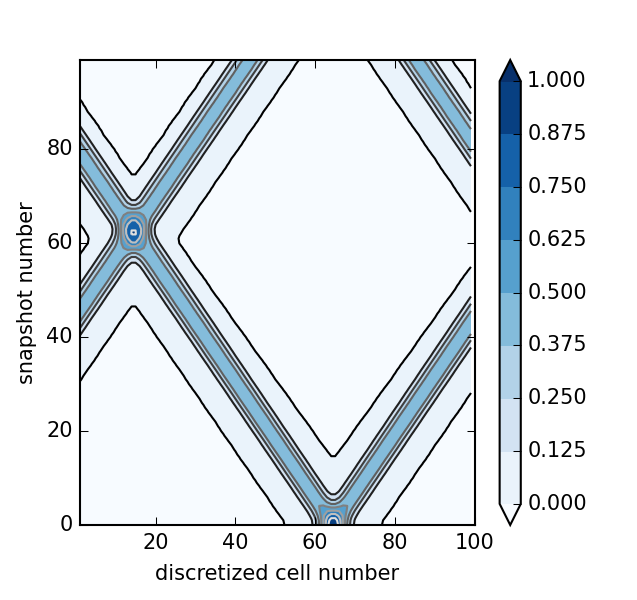} &
        \includegraphics[width=0.46\textwidth]{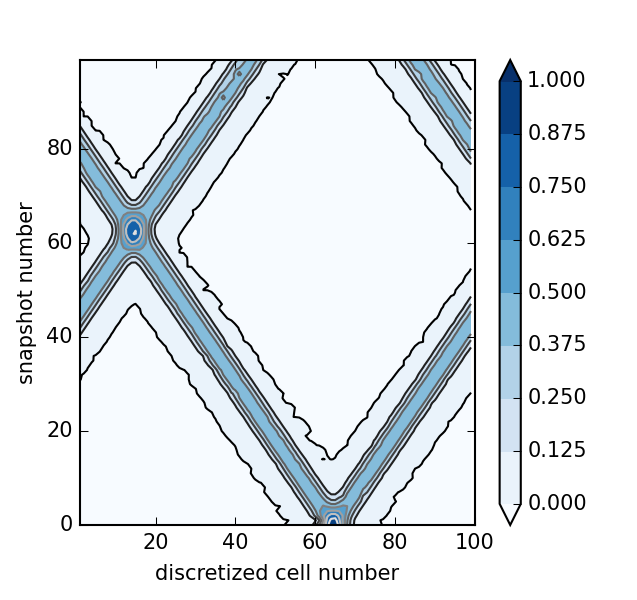} 
    \end{tabular}
    \caption{Snapshot matrix of the $p$ variable in the 
        acoustic equation (P3) (left) and 
    its approximation via the transport reversal algorithm (right). 
    The $L^2$-norm of the difference is $2.1841 \times 10^{-3}$.}
    \label{fig-snapshotvrecon-acoustic}
\end{figure}

\begin{figure}
    \includegraphics[width=1.0\textwidth]{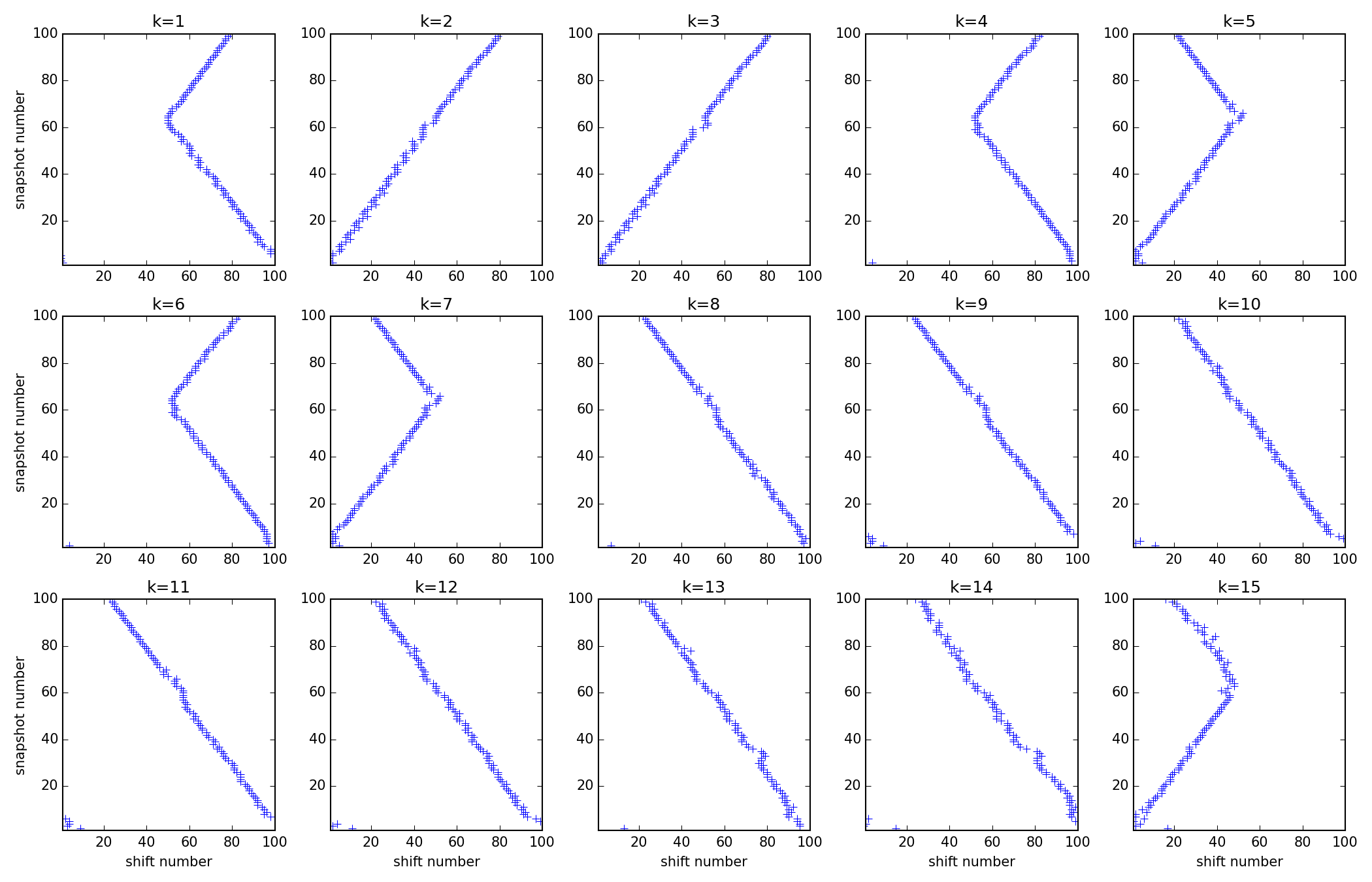}
    \caption{Shift numbers $\bnu_k$ \eqref{eq-bfV} for each iteration $k$,
        for the acoustic equations example. 
        Single pivot (the initial condition) was used.}
    \label{fig-courant-acoustic}
\end{figure}

%
%

\begin{figure}
    \begin{tabular}{cc}
        \includegraphics[width=0.46\textwidth]{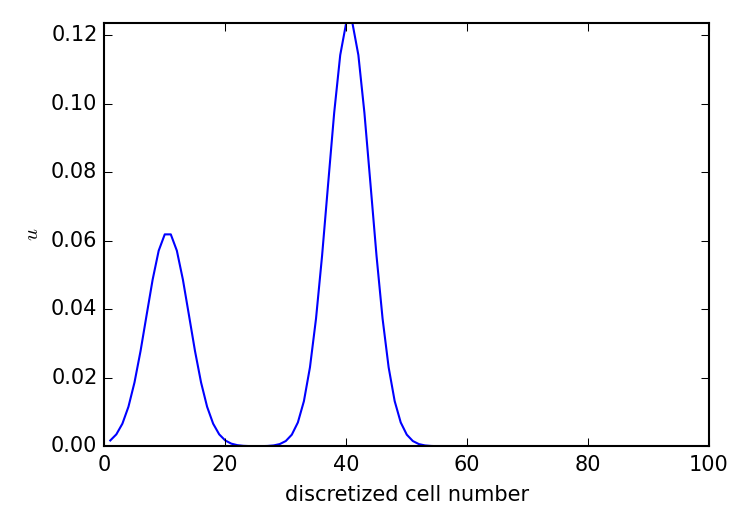} &
        \includegraphics[width=0.46\textwidth]{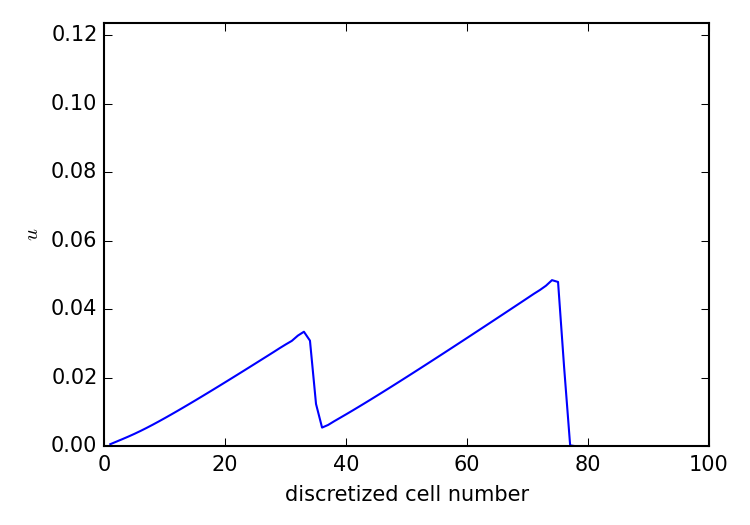} 
    \end{tabular}
    \caption{Initial condition of the Burgers' equation (P4) (left) and 
    its final snapshot (right).}
    \label{fig-initfinal-burgers}
\end{figure}

\begin{figure}
    \begin{tabular}{cc}
        \includegraphics[width=0.46\textwidth]{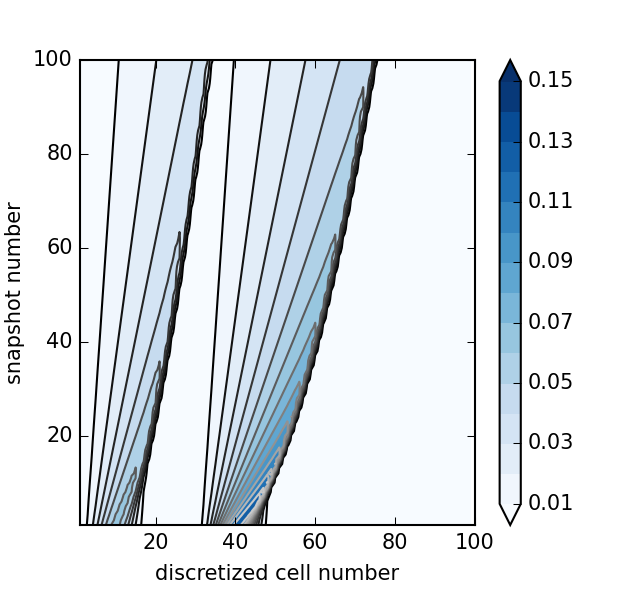} &
        \includegraphics[width=0.46\textwidth]{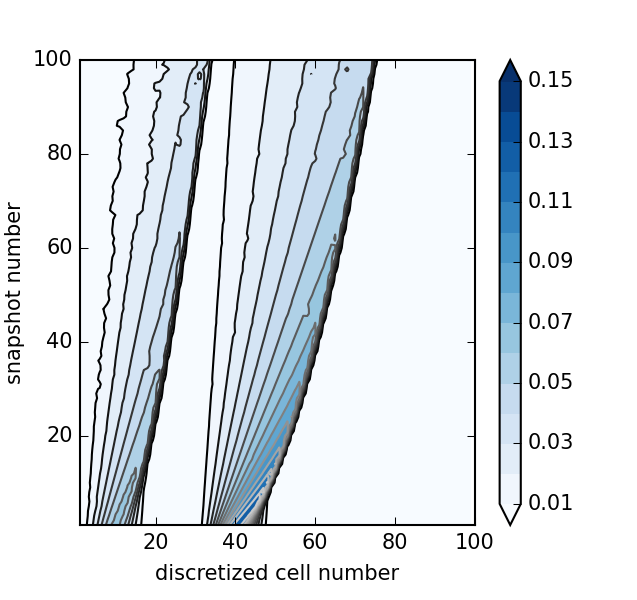} 
    \end{tabular}
    \caption{Snapshot matrix of the Burgers' equation (P4) (left) and 
    its approximation via the transport reversal algorithm (right).
    $L^2$-norm of the difference is $9.6333 \times 10^{-5}$.}
    \label{fig-snapshotvrecon-burgers}
\end{figure}

\begin{figure}
    \includegraphics[width=1.0\textwidth]{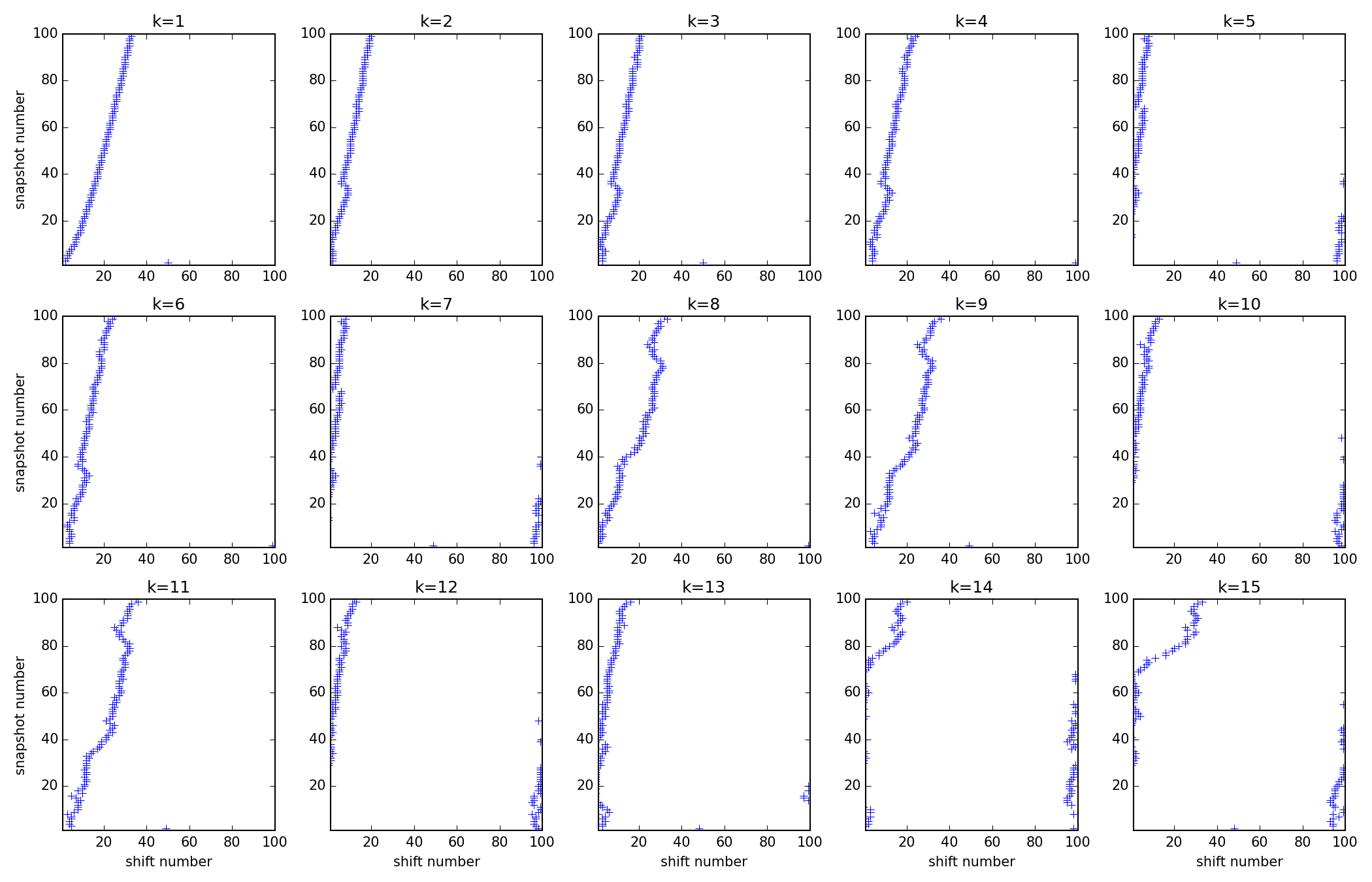}
    \caption{Shift numbers $\bnu_k$ \eqref{eq-bfV} for iterations $k = 1$ to
        $15$,
        for the Burgers' equations example. 
        The first pivot (the initial condition) was used for all
        iterations shown here.}
    \label{fig-courant-burgers}
\end{figure}

\begin{figure}
    \begin{tabular}{cc}
      Snapshot 15 & Snapshot 50 \\
      \includegraphics[width=0.46\textwidth]{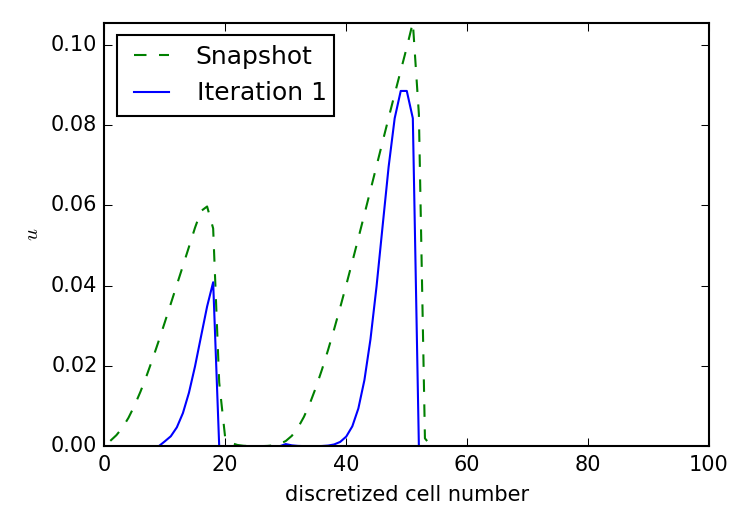} &
      \includegraphics[width=0.46\textwidth]{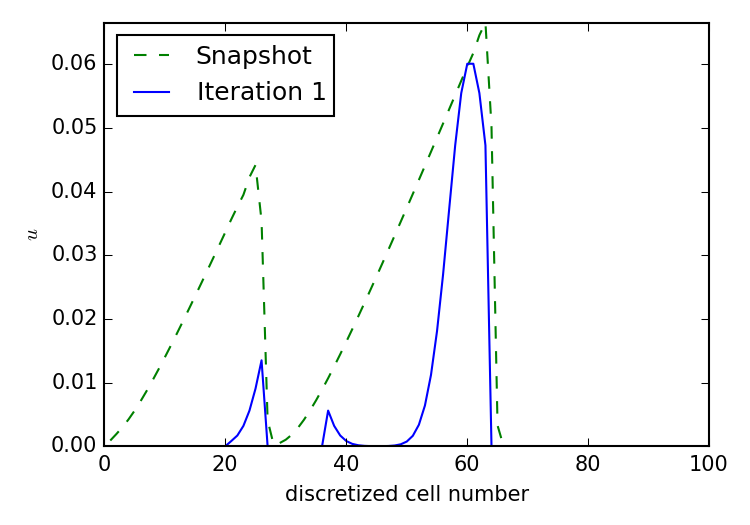} \\
      \includegraphics[width=0.46\textwidth]{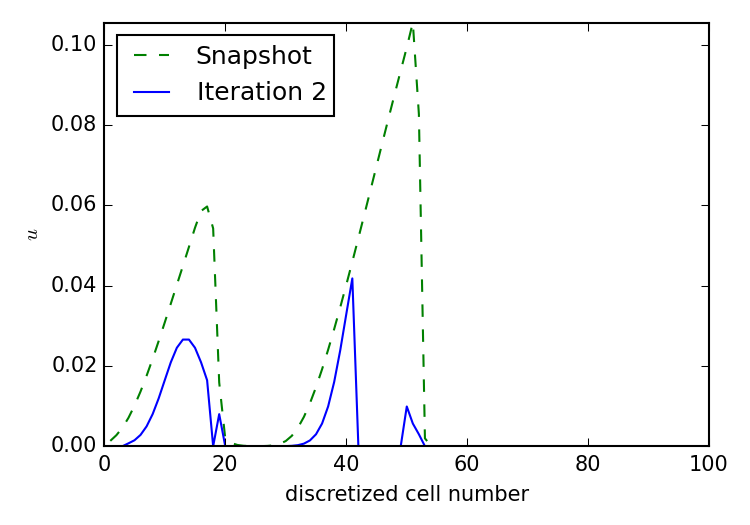} &
      \includegraphics[width=0.46\textwidth]{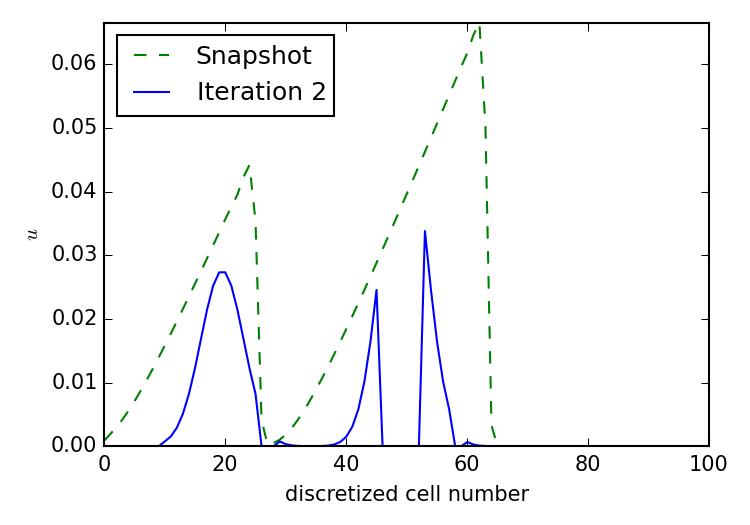} \\
      \includegraphics[width=0.46\textwidth]{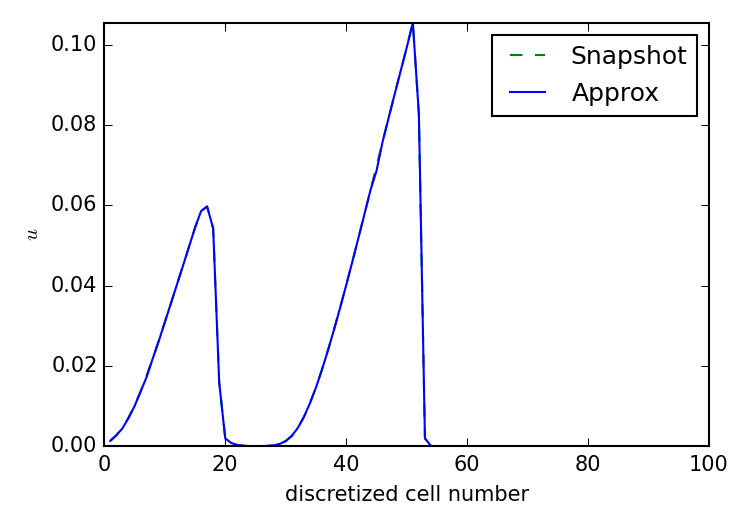} &
      \includegraphics[width=0.46\textwidth]{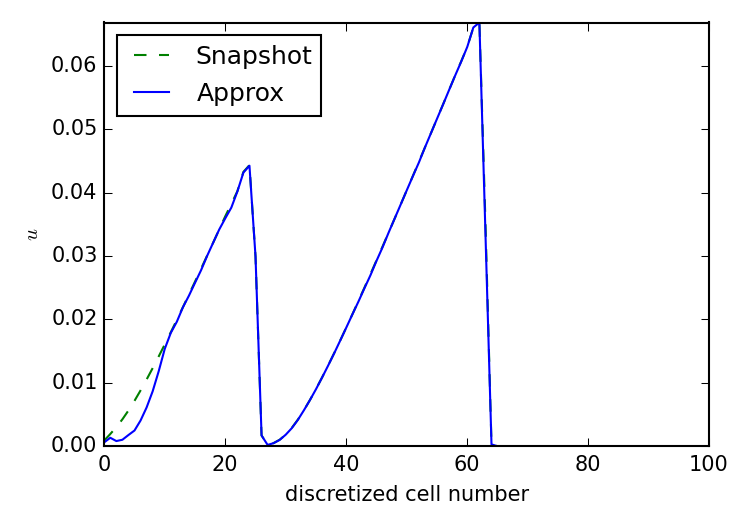} 
    \end{tabular}
    \caption{
    Contributions from first two iterations of the transport reversal
    (rows 1 and 2) for snapshots 15 and 50 (columns 1 and 2) 
    given by the expression \eqref{eq-contrib12}.
    Final approximation at iteration 30 is given in the last row.
    The contribution from first iteration alone
    attempts to capture the taller shock to the right (row 1), 
    and that of the second iteration captures the shorter one to 
    the left (row 2).
    }
    \label{fig-2pieces-burgers}
\end{figure}

\subsubsection{Acoustic equation} 
The snapshot of the $p$ variable for the acoustic equation is given in 
Figure \ref{fig-snapshotvrecon-acoustic}. No pivoting was required up to 
the given maximum number of iterations $K=15$. 
The $L^2$-norm of the residual at the final iteration 
was $2.1841 \times 10^{-3}$. The corresponding shift numbers for each iteration 
are shown in Figure \ref{fig-courant-acoustic}.
The diagonal pattern is clearly visible, which indicates that the method
is capturing the two profiles being transported at constant speed through
the domain. However, there is some ambiguity of between the two profiles when they
pass each other near snapshot number $60$. 
Notice how in the computed shift numbers for $k=1$ shown 
in Figure \ref{fig-courant-acoustic} the profile is transported first to the 
right, then the direction is reversed around the snapshot $60$, rather
than keeping straight. This behavior can be changed by adding
higher-order finite difference terms of $\bnu_k$ 
as penalty term in \eqref{eq-bnumin-penalty}
so that the second derivative of the shift numbers are kept small.

\subsubsection{Burgers' equation} 
Now we apply transport reversal to the snapshot matrix from 
the Burgers' equation (P4). The initial condition and its
final snapshot is shown in Figure \ref{fig-initfinal-burgers}.
The entire snapshot matrix and its approximate reconstruction are shown in 
Figure \ref{fig-snapshotvrecon-burgers}. 
The total number of iterations was $K=30$ and pivoting occured once
at iteration 19. 

The $L^2$-norm of the residual at the final iteration 
was $9.6333 \times 10^{-5}$. 
The corresponding shift numbers extracted for iterations 1-15 
are shown in Figure \ref{fig-courant-burgers}.
The shift numbers computed here also face an ambiguity between the two separate
humps at times, in a similar manner to the acoustic equations example. 
Adding more regularity will remove this ambiguity.

The first two shift numbers in Figure \ref{fig-courant-burgers}
correspond to the movement of the pivot that 
attempts to match the deforming hump to the left and to the right.
Since the left and the right humps move at slightly different speeds,
transporting the initial condition at constant speed is only able to match 
one of them.
It is helpful to isolate the contributions from the first two iterations from 
the algorithm. That is, we observe
\begin{equation}
    h_{jk} \brho_{jk} \odot \bfK^{\nu_{jk}} \ba_1
    \quad \text{ for }
    k = 1,2 \text{ and } j = 15, 50.
    \label{eq-contrib12}
\end{equation}
These contributions are shown in Figure \ref{fig-2pieces-burgers}.
Note how in the first iteration the initial profile is cut off to match
the hump to the right. In the second iteration, the hump to the right in the 
initial condition is cut off to match the left shock in the snapshot.
This illustrates the flexibility provided by the cut-off vectors $\brho$ 
for capturing the deformation occurring in the profile.

\section{Extensions of the shift operator} \label{sec:extshift}

In this section, we consider extensions of 
the matrix $\bfK$ \eqref{eq-ku0} above.
The matrix $\bfK^\omega$ ($\omega \in \ZZ_N$) is a basic component
of the minimization problem \eqref{eq-bnumin}, and two extensions of 
$\bfK^\omega$ will be given here.
First, we start with an extension using a linear interpolation
between $\bfK^\omega$ and $\bfK^{\omega + 1}$ by an analogy to the upwind flux.
This yields a continuous operator $\cK(\tilde \omega)$ 
over the real numbers ($\tilde \omega \in \RR$), rather than over integers.
Since this operator now causes some numerical diffusion due to its 
approximation, a reconstruction procedure is introduced to sharpen the profile.
The second extension allows the advection velocity in the advection 
equation \eqref{eq-advection} to depend on the spatial variable.
This extension is $\cK_c(\tilde \omega)$ with prescribed velocity field $c$.
We also discuss the pivoting procedure that becomes necessary
for linear systems.

We remark that the extensions that appear in this section can be used
in the greedy algorithm introduced in the previous section. 
Such use would only require that one change the operator $\bfK$ above
with $\cK$ or $\cK_c$.

\subsection{Extension of $\bfK$ by upwind flux}\label{sec:upwind}
First let us recall the finite volume upwind flux, which will motivate
our definitions below. The finite volume update of the advection equation \eqref{eq-advection} 
is given by
\beq
u^{n+1}_j = u^{n}_j - \frac{\Delta t}{\Delta x} 
    \prn{f^{n}_{j+1/2} - f^{n}_{j-1/2}}. \label{eq-upwind}
\eeq
The upwind flux is defined by $f^n_{j-1/2} \equiv c u_{j-1}^n$, and letting
$\nu \equiv c \Delta t/\Delta x$ be the \emph{shift number,}
the time-step \eqref{eq-upwind} 
can be is expressed as a linear interpolation between
$u_{j-1}^n$ and $u_j^n$,
\beq
u^{n+1}_j = \prn{\frac{c \Delta t}{\Delta x}} u^{n}_{j-1} + \prn{ 1 - \frac{c \Delta t}{\Delta x}} u^{n}_j 
= \nu u^n_{j-1} + \prn{1 - \nu} u^n_j.
\label{eq-fvmupdate}
\eeq
We write the update in \eqref{eq-upwind} as a matrix multiplication.

\begin{defi} \label{defi-upwindmat}
    Let $\bfK$ be the permutation matrix in \eqref{eq-ku0}.
    Define the matrix
    $\bfK(\nu) \equiv (1 - \nu) \bfI + \nu \bfK.$
Let us also define the \emph{discretized Laplacian} 
$\bfL_h \equiv (\bfK + \bfK^T - 2\bfI)/h^2,$
where $h = 1/N.$
\end{defi}
We list some basic properties of the matrix $\bfK(\nu)$.

\begin{restatable}{lem}{upwindbasic}\label{lem:upwind_basic}
$\bfK(\nu)$ satisfies
\begin{itemize}
\item[(a)] $\bfK(\nu) \bfK(\omega) = \bfK(\omega) \bfK(\nu)$.
\item[(b)] $\bfK(\nu) \bfK(\omega)^T = \bfK(\omega)^T \bfK(\nu)$.
\item[(c)] For $0 \le \nu,\omega \le 1$ and $\nu + \omega \le 1$, 
$\bfK(\nu) \bfK(\omega) = \bfK(\nu + \omega) + \cO\prn{1/N^2},$
where the constant for $\cO\prn{1/N^2}$ 
is a shifted discrete Laplacian (see paragraph below.)
\item[(d)] For $\bu \in \RR^{N}$, $\sum_{j=1}^N \prn{\bu}_j = \sum_{j=1}^N \prn{\bfK(\nu) \bu}_j$.
\end{itemize}
\end{restatable}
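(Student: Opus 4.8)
The plan is to exploit the fact that $\bfK$ is a circulant permutation matrix, so that $\bfK(\nu) = (1-\nu)\bfI + \nu\bfK$ is an affine polynomial in the single matrix $\bfK$; almost everything then reduces to elementary polynomial algebra together with the two structural facts that $\bfK$ is orthogonal, $\bfK\bfK^T = \bfK^T\bfK = \bfI$, and that $\bone^T\bfK = \bone^T$.

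For (a) I would expand $\bfK(\nu)\bfK(\omega) = (1-\nu)(1-\omega)\bfI + [(1-\nu)\omega + \nu(1-\omega)]\bfK + \nu\omega\bfK^2$ and observe that every coefficient is symmetric in $\nu$ and $\omega$; hence the product is invariant under swapping $\nu \leftrightarrow \omega$, which is exactly $\bfK(\omega)\bfK(\nu)$. For (b) the same expansion of $\bfK(\nu)\bfK(\omega)^T$ and of $\bfK(\omega)^T\bfK(\nu)$ produces four terms each, and because $\bfK$ is orthogonal the mixed terms $\bfK\bfK^T$ and $\bfK^T\bfK$ both collapse to $\bfI$, so the two expressions coincide term by term. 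No estimates are needed in either part.

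The one part with content is (c). I would expand $\bfK(\nu)\bfK(\omega)$ as above and subtract $\bfK(\nu+\omega) = (1-\nu-\omega)\bfI + (\nu+\omega)\bfK$; the $\bfI$- and $\bfK$-coefficients of the difference work out to $\nu\omega$ and $-2\nu\omega$, leaving $\bfK(\nu)\bfK(\omega) - \bfK(\nu+\omega) = \nu\omega(\bfI - 2\bfK + \bfK^2) = \nu\omega(\bfK - \bfI)^2$. The key algebraic identity is then $(\bfK-\bfI)^2 = \bfK(\bfK + \bfK^{-1} - 2\bfI) = \bfK(\bfK + \bfK^T - 2\bfI) = h^2\bfK\bfL_h$, using $\bfK^T = \bfK^{-1}$ and the definition of $\bfL_h$. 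Hence the error is exactly $\nu\omega\, h^2\,\bfK\bfL_h = (\nu\omega/N^2)\,\bfK\bfL_h$, namely $1/N^2$ times the discrete Laplacian $\bfL_h$ shifted one cell through $\bfK$ and scaled by $\nu\omega \le 1$; this is precisely what is meant by the $\cO(1/N^2)$ term being a shifted discrete Laplacian. The main thing to get right here is the careful bookkeeping of the three coefficients and the recognition that $(\bfK-\bfI)^2$ factors through $\bfL_h$.

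For (d) I would write the column sum as $\bone^T\prn{\bfK(\nu)\bu}$ and use that $\bfK$ is a permutation matrix, so each of its columns sums to one, i.e.\ $\bone^T\bfK = \bone^T$; then $\bone^T\bfK(\nu) = (1-\nu)\bone^T + \nu\bone^T = \bone^T$ because the affine weights sum to one, giving $\sum_j\prn{\bfK(\nu)\bu}_j = \bone^T\bu = \sum_j\prn{\bu}_j$. The only genuine obstacle in the whole lemma lies in the interpretation of (c): since $\|\bfK\bfL_h\|$ is itself of order $N^2$, the bound $\cO(1/N^2)$ is meaningful only when both operators act on smooth grid data, so I would present the result as the \emph{exact} identity $\nu\omega\, h^2\,\bfK\bfL_h$ rather than as an operator-norm estimate.
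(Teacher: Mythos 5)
Your proposal is correct and follows essentially the same route as the paper's proof: direct expansion of the affine polynomials in $\bfK$, using that $\bfI$, $\bfK$, $\bfK^T$ commute (with $\bfK\bfK^T = \bfI$), identifying the part-(c) residual $\nu\omega(\bfI - 2\bfK + \bfK^2)$ as the shifted discrete Laplacian $\frac{\nu\omega}{N^2}\bfK\bfL_h$, and using $\bone^T\bfK(\nu) = \bone^T$ for (d). Your closing remark on interpreting $\cO(1/N^2)$ as an exact identity meaningful only on smooth grid data matches the paper's own caveat stated immediately after the lemma.
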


\begin{proof}
    The proof follows directly from definitions and 
    is given in Appendix \ref{ap:proof} \label{proof:upwind_basic}.
\end{proof}
The notation $\cO(1/N^2)$ here and throughout the paper is to be interpreted as
follows. When a matrix term is 
to be acted on vectors $\bv$ that are discretizations of twice differentiable 
functions on a grid of size $\cO(N)$ (so that its discrete Laplacian $\bfL_h \bv$ 
converges), the resulting term is of size $\cO(1/N^2)$.

Note that $\bfK(\nu)^T$ is obtained if we use the one-sided flux
in \eqref{eq-upwind} on advection with velocity $-1$ rather than $1$ (so that this is actually still the upwind flux).
Naturally, when the matrix $\bfK(\nu)^T \bfK(\nu)$ is multiplied
to the left of a vector, it propagates the entries of the vector 
first in one direction and then back in the opposite direction.
The resulting vector 
should be close to the initial one, in other words $\bfK(\nu)^T \bfK(\nu)$
must be close to the identity. This fact is summarized in the following lemma.

\begin{lem} \label{lem:laplacian}
    $\bfK(\nu)^T\bfK(\nu) = \bfK(\nu)\bfK(\nu)^T$ 
is an identity up to $\cO(1/N^2)$, in which the residual is a multiple of discrete Laplacian with periodic boundary condition.
It satisfies the bound
\beq
\Norm{ \bfK(\nu)^T \bfK(\nu) - \bfI}{2} \le 4\nu(1-\nu).
\label{eq-identity_error}
\eeq
\end{lem}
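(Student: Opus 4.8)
The plan is to reduce the whole statement to one short algebraic identity followed by a triangle-inequality estimate. First I would expand the product directly from the definition $\bfK(\nu) = (1-\nu)\bfI + \nu\bfK$ in Definition \ref{defi-upwindmat}, using the single structural fact that $\bfK$ is a permutation matrix and hence orthogonal, so that $\bfK^T\bfK = \bfK\bfK^T = \bfI$. The two $\nu^2$ terms then collapse to $\nu^2\bfI$, and after collecting the diagonal contributions via $(1-\nu)^2 + \nu^2 = 1 - 2\nu(1-\nu)$ one obtains
\[
\bfK(\nu)^T\bfK(\nu) = \bfI + \nu(1-\nu)\prn{\bfK + \bfK^T - 2\bfI}.
\]
By the definition of the discretized Laplacian in Definition \ref{defi-upwindmat}, $\bfK + \bfK^T - 2\bfI = h^2\bfL_h$, so the residual is exactly $\bfK(\nu)^T\bfK(\nu) - \bfI = \nu(1-\nu)h^2\bfL_h$, a scalar multiple of the periodic discrete Laplacian. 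Since $h = 1/N$, this is $\cO(1/N^2)$ in the sense defined after Lemma \ref{lem:upwind_basic}, which establishes the first two assertions.

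The commutativity $\bfK(\nu)^T\bfK(\nu) = \bfK(\nu)\bfK(\nu)^T$ requires no separate work: repeating the same expansion for $\bfK(\nu)\bfK(\nu)^T$ produces the identical symmetric expression, since only the symmetric combination $\bfK + \bfK^T$ survives. Alternatively one may invoke Lemma \ref{lem:upwind_basic}(b) with $\omega = \nu$.

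For the quantitative bound \eqref{eq-identity_error} I would factor the scalar out of the identity just derived. Because $0 \le \nu \le 1$, the coefficient $\nu(1-\nu)$ is nonnegative, so
\[
\Norm{\bfK(\nu)^T\bfK(\nu) - \bfI}{2} = \nu(1-\nu)\,\Norm{\bfK + \bfK^T - 2\bfI}{2}.
\]
It then remains only to bound $\Norm{\bfK + \bfK^T - 2\bfI}{2} \le 4$, which follows at once from the triangle inequality together with the fact that a permutation matrix is orthogonal, giving $\Norm{\bfK}{2} = \Norm{\bfK^T}{2} = 1$ and $\Norm{2\bfI}{2} = 2$. Combining the two displays yields $\Norm{\bfK(\nu)^T\bfK(\nu) - \bfI}{2} \le 4\nu(1-\nu)$, as claimed.

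There is no genuine obstacle here; the only points demanding care are keeping track of the regime $0 \le \nu \le 1$ so that the scalar may be pulled out without an absolute value, and correctly identifying the residual with a multiple of the discrete Laplacian rather than merely an $\cO(1/N^2)$ term. If a sharp constant were wanted instead of the clean bound $4$, I would diagonalize the circulant matrix $\bfK$ by the discrete Fourier transform: its eigenvalues are the $N$-th roots of unity $e^{2\pi i k/N}$, so $\bfK + \bfK^T - 2\bfI$ is symmetric with eigenvalues $2\cos(2\pi k/N) - 2 \in [-4,0]$, whence its spectral norm is $\le 4$ and approaches $4$ as $N \to \infty$. This refinement is unnecessary for the stated inequality, so the triangle-inequality route is the one I would record.
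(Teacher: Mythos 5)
Your proof is correct, and its core is identical to the paper's: both rest on the algebraic identity
$\bfK(\nu)^T\bfK(\nu) - \bfI = \nu(1-\nu)\prn{\bfK + \bfK^T - 2\bfI} = \frac{\nu(1-\nu)}{N^2}\bfL_h$,
which you derive by explicit expansion and the paper states directly from the definition. The one genuine divergence is how the norm bound $\Norm{\bfK + \bfK^T - 2\bfI}{2} \le 4$ is justified: the paper uses von Neumann analysis, computing the Fourier symbol $2\prn{\cos(2\pi\xi h) - 1}$ of this circulant matrix, whereas you use the triangle inequality together with $\Norm{\bfK}{2} = \Norm{\bfK^T}{2} = 1$, which holds because the permutation matrix $\bfK$ is orthogonal. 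Both routes give the same constant $4$. Yours is more elementary and shorter; the paper's spectral computation buys more, namely that the eigenvalues lie in $[-4,0]$, so the constant $4$ is asymptotically sharp and the residual genuinely behaves as the periodic discrete Laplacian on its full spectrum --- information you recover only in your optional circulant-diagonalization remark, which is in substance exactly the paper's von Neumann argument. Finally, your explicit observation that the regime $0 \le \nu \le 1$ is needed to pull out the scalar $\nu(1-\nu)$ without an absolute value is a point the paper leaves implicit (the stated bound would be vacuous for $\nu$ outside $[0,1]$), and it is worth recording.
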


\begin{proof}
By definition and recalling $h= 1/N$, 
\beq
\bfK(\nu)^T \bfK(\nu) - \bfI
= \nu (1-\nu)(\bfK + \bfK^T - 2\bfI)
= \frac{\nu(1-\nu)}{N^2} \bfL_h,
\label{eq-ktk}
\eeq
Now, $\Norm{\bfK + \bfK^T - 2\bfI}{2} \le 4$ by von Neumann analysis 
\bals
\abs{\lambda_\xi} &= \abs{e^{i2\pi\xi (x + h)} - 2 e^{i2\pi\xi x} + e^{i 2\pi\xi(x-h)}} \\ 
&= \abs{e^{i2\pi\xi x} \prn{ e^{i 2\pi\xi h} - 2 + e^{-i 2\pi\xi h}}} 
= 2 \abs{\prn{\cos(2\pi\xi h) - 1}} \le 4.
\eals
\end{proof}

The inequality \eqref{eq-identity_error} holds for any $N$, and it is 
merely an estimate
for the total numerical diffusion due to the upwind flux, resulting from both
$\bfK(\nu)$ and $\bfK(\nu)^T$.
This marks a point of departure from the continuous setting
studied in \cite{marsden2, marsden1}, as it indicates that the 
translational actions discretized in such a way no longer form a 
group;
the inverse \eqref{eq-ktk} and multiplication (Lemma \ref{lem:upwind_basic} (c)
and Lemma \ref{lem:supwind_basic} (c)) are both only approximate and their residuals 
indicate the presence of numerical diffusion. 

Note that $\bfK(\nu)$ was defined in Definition \ref{defi-upwindmat} 
for any $\nu \in \RR$.
However, when $\nu$ is viewed as the Courant number,
the Courant-Friedrichs-Lewy condition imposes a necessary condition 
for stability of the upwind method \cite{fvmbook}, which 
requires $\nu \in [0,1]$ in this case.
This prohibits the use of $\bfK(\nu)$ when $\nu$ is 
outside of the unit interval,
at least superficially.
There is a straightforward generalization of this $\nu$ to be any real number 
$\tilde \nu \in \RR$ by first 
shifting exactly an integer number of times (determined by the integer part $s$ 
of $\tilde \nu$), which is 
accomplished by multiplying by $\bfK^s$,  and then applying 
$\bfK(\nu)$ where $\nu$ is the remaining fractional part.

This leads us to the next definition.
\begin{defi}
Given a shift number $\tilde{\nu} \in \RR$,
let $s$ and $\nu$ be its integral part and the fractional part, 
$s \equiv \floor{\tilde{\nu}}$ and $\nu \equiv \tilde{\nu} - s$, respectively.
We define the matrix $\cK(\tilde{\nu})$ as follows,
\beq
    \cK(\tilde{\nu}) \equiv 
\begin{cases}
\bfK^s\bfK(\nu) & \text{ if } s \ge 0 \\
\prn{\bfK^T}^s \bfK(\nu) & \text{ if } s < 0
\end{cases}.
\label{eq-supwindmat}
\eeq
We will also use the notation, for $s \in \ZZ$ and $\omega \in \RR$,
\beq
    \cK(s,\omega) \equiv 
\begin{cases}
\bfK^s\bfK(\omega) & \text{ if } s \ge 0 \\
\prn{\bfK^T}^s \bfK(\omega) & \text{ if } s < 0
\end{cases}.
\label{eq-supwindmat2}
\eeq
\end{defi}
Then it follows that $\cK(\tilde{\nu})^T = \cK(-\tilde{\nu})$
since if one writes out the integral and fractional parts $\tilde{\nu} = s + \nu$ and $-\tilde{\nu} = -(s+1) + (1-\nu)$,
\beq
\cK(\tilde{\nu})^T
= \prn{\bfK^T}^{s+1} \prn{(1- \nu)\bfI + \nu \bfK}
= \prn{\bfK^{s+1}}^T \bfK(1- \nu) = \cK(-\tilde{\nu}).
\label{eq-negation}
\eeq
All of Lemma \ref{lem:upwind_basic} follows through easily.

\begin{lem}\label{lem:supwind_basic}
    Let $\cK(\tilde{\nu})$ be as above. 
Let $s\equiv \floor{\tilde{\nu}}$, $\nu \equiv \tilde{\nu} - s$ and
$r\equiv \floor{\tilde{\omega}}$, $\omega \equiv \tilde{\omega} - r$.
Then it satisfies
\begin{itemize}
\item[(a)] $\cK(\tilde{\nu}) \cK(\tilde{\omega}) = \cK(\tilde{\omega}) \cK(\tilde{\nu})$.
\item[(b)] $\cK(\tilde{\nu}) \cK(\tilde{\omega})^T = \cK(\tilde{\omega})^T \cK(\tilde{\nu})$.
\item[(c)] For $\tilde{\nu}, \tilde{\omega}$ such that $0 \le \nu,\omega \le 1$ and $\nu + \omega \le 1$, 
we have $\cK(\tilde{\nu}) \cK(\tilde{\omega}) = \cK(\tilde{\nu} + \tilde{\omega}) + \cO\prn{1/N^2},$
where the constant for $\cO\prn{1/N^2}$ 
is a shifted discrete Laplacian.
\item[(d)] For $\bu \in \RR^{N}$, $\sum_{j=1}^N \prn{\bu}_j = \sum_{j=1}^N \prn{\cK(\tilde{\nu}) \bu}_j$.
\end{itemize}
\end{lem}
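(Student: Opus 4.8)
The plan is to reduce everything to the already-established Lemma~\ref{lem:upwind_basic} by exploiting the fact that all the operators in sight are built from the single circulant generator $\bfK$. Since $\bfK$ is an orthogonal cyclic permutation, $\bfK^T = \bfK^{-1} = \bfK^{N-1}$, so the two branches in the definition \eqref{eq-supwindmat} collapse into the single Laurent-polynomial expression $\cK(\tilde{\nu}) = \bfK^{s}\bfK(\nu)$, valid for every integer $s = \floor{\tilde{\nu}}$ under the convention $\bfK^{-1} = \bfK^{T}$. Two Laurent polynomials in the same matrix always commute, which immediately yields (a). For (b), I would invoke the transposition identity \eqref{eq-negation}, $\cK(\tilde{\omega})^{T} = \cK(-\tilde{\omega})$; since $\cK(-\tilde{\omega})$ is again a Laurent polynomial in $\bfK$, part (b) follows at once from (a) applied to $\tilde{\nu}$ and $-\tilde{\omega}$.

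Part (d) is equally short. Writing $\bone$ for the all-ones vector, the claim $\sum_j(\bu)_j = \sum_j(\cK(\tilde{\nu})\bu)_j$ for all $\bu$ is equivalent to $\bone^{T}\cK(\tilde{\nu}) = \bone^{T}$. One has $\bone^{T}\bfK = \bone^{T}$ and $\bone^{T}\bfK^{T} = \bone^{T}$ because each row and column of a permutation matrix sums to one, and $\bone^{T}\bfK(\nu) = \bone^{T}$ by Lemma~\ref{lem:upwind_basic}(d). Since $\cK(\tilde{\nu})$ is a product of such factors, the row vector $\bone^{T}$ is fixed, proving (d).

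The real content is (c). Using the commutativity just established, I would first write
\beq
\cK(\tilde{\nu})\cK(\tilde{\omega}) = \bfK^{s}\bfK(\nu)\,\bfK^{r}\bfK(\omega) = \bfK^{s+r}\,\bfK(\nu)\bfK(\omega).
\eeq
Under the hypotheses $0 \le \nu,\omega \le 1$ and $\nu+\omega \le 1$, Lemma~\ref{lem:upwind_basic}(c) gives $\bfK(\nu)\bfK(\omega) = \bfK(\nu+\omega) + \cO(1/N^2)$, where the residual is a multiple of the discrete Laplacian. Premultiplying by the permutation $\bfK^{s+r}$ leaves the operator norm unchanged, so the residual stays $\cO(1/N^2)$; structurally it becomes $\bfK^{s+r}$ times a discrete Laplacian, i.e.\ a \emph{shifted} discrete Laplacian, exactly as asserted in the statement. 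It then remains to identify the main term $\bfK^{s+r}\bfK(\nu+\omega)$ with $\cK(\tilde{\nu}+\tilde{\omega})$. Here I would inspect the floor/fractional split of $\tilde{\nu}+\tilde{\omega} = (s+r) + (\nu+\omega)$: when $\nu+\omega < 1$ the integer part is $s+r$ and the fractional part is $\nu+\omega$, so $\cK(\tilde{\nu}+\tilde{\omega}) = \bfK^{s+r}\bfK(\nu+\omega)$ on the nose.

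The one place calling for care — and the only genuine obstacle, since the heavy lifting has already been done in Lemma~\ref{lem:upwind_basic} — is the boundary case $\nu+\omega = 1$, where $\tilde{\nu}+\tilde{\omega} = s+r+1$ is an integer and the floor jumps to $s+r+1$ with fractional part $0$. I would handle it by noting $\bfK(1) = \bfK$, whence $\bfK^{s+r}\bfK(\nu+\omega) = \bfK^{s+r}\bfK(1) = \bfK^{s+r+1} = \cK(s+r+1) = \cK(\tilde{\nu}+\tilde{\omega})$, so the identification is consistent at the endpoint as well. With this bookkeeping dispatched, (c) is complete, and all four parts follow from Lemma~\ref{lem:upwind_basic} together with the observation that $\cK(\tilde{\nu})$ is a Laurent polynomial in the circulant matrix $\bfK$; no analytic estimate beyond the one already proved there is required.
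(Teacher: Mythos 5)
Your proof is correct, and it is essentially the approach the paper intends: the paper omits the proof of this lemma, stating only that it is ``similar to that of Lemma~\ref{lem:upwind_basic},'' and your argument supplies exactly that reduction — collapsing both branches of \eqref{eq-supwindmat} via $\bfK^{-1}=\bfK^{T}$ so that everything is a polynomial in the circulant $\bfK$, invoking Lemma~\ref{lem:upwind_basic} parts (a)--(d), and handling the floor/fractional bookkeeping including the endpoint $\nu+\omega=1$. The only detail worth flagging is your implicit (and sensible) reading of the paper's $(\bfK^T)^s$ for $s<0$ as $(\bfK^T)^{|s|}$, which is clearly what the definition means.
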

\begin{proof}
    We omit the proof, as it is similar to that of Lemma \ref{lem:upwind_basic}.
\end{proof}
This is a generalization of the upwind method beyond the
constraint of the CFL condition, and can be viewed as a
special case of large time-step (LTS) method
\cite{largetimestep1}, which can also be extended to nonlinear systems 
\cite{largetimestep3,largetimestep2}.

\subsection{Transport reversal in $\RR$}\label{sec:greedy}

Using the definitions above, we generalize the 
minimization problem \eqref{eq-bnumin} to be applied to a 
snapshot matrix $\bfA$.
We will use the notations
$\bfD \equiv \bfK - \bfI$ and 
$\bfL \equiv \bfK + \bfK^T - 2\bfI.$

\begin{restatable}{lem}{nulemma} \label{lem:nu} %
Suppose $\ba, \bb \in \RR^N$ are non-constant. 

If we let $\nu = \argmin_{\omega \in \RR} \Norm{\bb - \bfK(\omega)^T \ba}{2}^2$,
then
\beq
\nu = \frac{1}{2} \prn{ 1 - 2 \frac{ \ba^T \bfD \bb}{\ba^T \bfL \ba}}. \label{eq-nu}
\eeq
\end{restatable}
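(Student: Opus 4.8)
The plan is to exploit that the objective is a scalar quadratic in the single real variable $\omega$, so that its minimizer can be found by elementary one-variable calculus, and then to massage the resulting ratio into the stated form using two algebraic identities enjoyed by the permutation matrix $\bfK$. The hypothesis of non-constancy is used only to guarantee well-posedness.

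First I would observe that $\bfK(\omega)^T \ba = \prn{(1-\omega)\bfI + \omega \bfK^T}\ba = \ba + \omega\,\bfD^T\ba$ is affine in $\omega$, so the residual is $\bb - \bfK(\omega)^T\ba = (\bb-\ba) - \omega\,\bfD^T\ba$ and the objective becomes
\beq
f(\omega) = \Norm{\bb-\ba}{2}^2 - 2\omega\,(\bb-\ba)^T\bfD^T\ba + \omega^2\,\Norm{\bfD^T\ba}{2}^2 .
\eeq
This is a quadratic with leading coefficient $\Norm{\bfD^T\ba}{2}^2$. I would then note that $\bfD^T\ba = (\bfK^T-\bfI)\ba$ vanishes precisely when $\ba$ is a fixed point of the cyclic shift, i.e. when $\ba$ is constant; since $\ba$ is assumed non-constant the leading coefficient is strictly positive, $f$ is strictly convex, and the unique minimizer is obtained from $f'(\omega)=0$, giving $\nu = (\bb-\ba)^T\bfD^T\ba\,/\,\Norm{\bfD^T\ba}{2}^2$.

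The remainder is purely algebraic, and here the two key identities enter. For the denominator I would use that $\bfK$ is a permutation matrix, so $\bfK\bfK^T=\bfI$ and hence $\bfD\bfD^T = (\bfK-\bfI)(\bfK^T-\bfI) = 2\bfI - \bfK - \bfK^T = -\bfL$; thus $\Norm{\bfD^T\ba}{2}^2 = \ba^T\bfD\bfD^T\ba = -\ba^T\bfL\ba$, which is non-negative since $\bfL$ is negative semidefinite and strictly positive by non-constancy of $\ba$. For the numerator I would rewrite $(\bb-\ba)^T\bfD^T\ba = \ba^T\bfD\bb - \ba^T\bfD\ba$ (transposing the scalars) and use that $\ba^T\bfL\ba = 2\ba^T\bfK\ba - 2\Norm{\ba}{2}^2 = 2\,\ba^T\bfD\ba$, so $\ba^T\bfD\ba = \tfrac12\,\ba^T\bfL\ba$.

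Finally I would substitute both identities into the ratio:
\beq
\nu = \frac{\ba^T\bfD\bb - \tfrac12\,\ba^T\bfL\ba}{-\,\ba^T\bfL\ba} = \frac12 - \frac{\ba^T\bfD\bb}{\ba^T\bfL\ba} = \frac12\prn{1 - 2\frac{\ba^T\bfD\bb}{\ba^T\bfL\ba}},
\eeq
which is exactly \eqref{eq-nu}. I do not expect any genuine obstacle: the whole argument is a one-variable quadratic minimization, and the only point requiring care is the well-posedness of the minimizer, which is precisely what the non-constancy of $\ba$ supplies (ensuring $\ba^T\bfL\ba \neq 0$). The non-constancy of $\bb$ is not needed in this particular computation.
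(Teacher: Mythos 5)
Your proof is correct and follows essentially the same route as the paper's: both treat the objective as a scalar quadratic in $\omega$, set the derivative to zero, and simplify to \eqref{eq-nu}, with non-constancy of $\ba$ guaranteeing the denominator $\ba^T\bfL\ba$ is nonzero. Your version is somewhat more explicit (spelling out the identities $\bfD\bfD^T=-\bfL$ and $\ba^T\bfL\ba=2\ba^T\bfD\ba$, and noting strict convexity and that non-constancy of $\bb$ is unused), but these are refinements of the same argument rather than a different approach.
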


\begin{proof}
Let,
\bals
\cJ(\omega)\equiv \Norm{ \bb - \bfK(\omega)^T\ba}{2}^2
&=
\prn{ \bb - \bfK(\omega)^T\ba}^T\prn{ \bb - \bfK(\omega)^T\ba} \\
&= \bb^T \bb - 2\ba^T \bfK(\omega) \bb + \ba^T \bfK(\omega) \bfK(\omega)^T\ba .
\eals
Taking a derivative,
\[
    \cJ'(\omega) = - 2 \ba^T \bfK'(\omega) \bb + \ba^T \bfK'(\omega) \bfK(\omega)^T \ba + \ba^T \bfK(\omega) \bfK'(\omega)^T \ba.
\]
Letting $\cJ'(\nu) = 0$ and expanding, we have
\[
    \nu = \frac{1}{2}\prn{1 - 2\frac{ \ba^T ( \bfK - \bfI) \bb}{\ba^T ( \bfK + \bfK^T - 2 \bfI) \ba}}
   = \frac{1}{2}\prn{1 - 2\frac{ \ba^T \bfD \bb}{\ba^T \bfL \ba}} .
\]
The nullspace of $\bfL$ is the span of constant vectors, so the 
denominator on the RHS does not vanish.
\end{proof}
Recall that the CFL condition required that $\nu \in [0,1]$,
and note that $\nu$ given by \eqref{eq-nu} is not guaranteed
to lie in this stability region.
The minimization \eqref{eq-bnumin} is now extended to the case when $\bfK$
is replaced by the matrix $\cK(\tilde{\nu})$,
\beq
\min_{\tilde{\omega}\in \RR} \Norm{\ba - \cK(\tilde{\omega})^T\bb}{2}^2. 
\label{eq-gnumin}
\eeq
It is immediate that this problem is symmetric with respect to the
vectors $\ba$ and $\bb$ up to $\cO(1/N^2)$. That is, the problem 
can be rewritten using \eqref{eq-ktk},
\beq
\min_{\tilde{\omega}\in \RR} \prn{\Norm{\bb - \cK(\tilde{\omega})^T\ba}{2}^2 + \cO(1/N^2)}. \label{eq-gnusymmmin}
\eeq

The problem \eqref{eq-gnumin} 
is only of one variable $\tilde{\omega}$ lying in an 
interval $[0,N]$, although this can be viewed as a non-convex minimization problem 
in $\RR^N$
as we will see in Section \ref{sec:geo}.  
With the partitioning $\{[m,m+1]: m=0, \cdots, N-1\}$ 
of $[0,N]$, a recursive relation can be found for the formula
\eqref{eq-nu} in terms of $j$, restricting the variable $\tilde{\omega}$ to 
a set of $2N-1$ positive reals.

\begin{restatable}{lem}{recursion} \label{lem:recursion} %
Suppose $\ba,\bb \in \RR^N$ and let $\nu_s^\pm$ be defined as
\beq
    \nu_s^+ \equiv \argmin_{\omega \in \RR} \Norm{\bb - \cK(s,\omega)^T \ba}{2}^2,
    \quad
    \nu_s^- \equiv \argmin_{\omega \in \RR} \Norm{\bb - \cK(-s,\omega)^T \ba}{2}^2.
    \label{eq-gnu}
\eeq
Then we have the relations 
\beq
    \nu_{s+1}^{+} = \nu_s^{+} + \frac{(\bfK^s\bb)^T\bfL\ba}{\ba^T\bfL\ba},
    \quad
    \nu_{s+1}^{-} = \nu_s^{-} + \frac{\prn{(\bfK^T}^s\bb)^T\bfL\ba}{\ba^T\bfL\ba}.
    \label{eq-recursion}
\eeq
\end{restatable}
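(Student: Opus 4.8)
\emph{Proof strategy.} The plan is to reduce the two-parameter problem defining $\nu_s^{\pm}$ to the single-variable formula of Lemma~\ref{lem:nu} by absorbing the fixed integer shift into the vector that is being transported, and then to read off the recursion by differencing in $s$.

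First I would rewrite the objective so that only a fractional shift remains. For $s \ge 0$ we have $\cK(s,\omega) = \bfK^s\bfK(\omega)$, hence $\cK(s,\omega)^T\ba = \bfK(\omega)^T(\bfK^T)^s\ba$. Setting $\ba_s \equiv (\bfK^T)^s\ba$, the problem defining $\nu_s^{+}$ in \eqref{eq-gnu} is \emph{exactly} the one-variable minimization $\min_{\omega}\Norm{\bb - \bfK(\omega)^T\ba_s}{2}^2$ solved in Lemma~\ref{lem:nu}. Applying that lemma with $\ba$ replaced by $\ba_s$ yields the closed form $\nu_s^{+} = \tfrac12\prn{1 - 2\,\ba_s^T\bfD\bb/(\ba_s^T\bfL\ba_s)}$; the branch $\nu_s^{-}$ is identical after replacing $(\bfK^T)^s$ by $\bfK^s$.

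Second I would simplify the denominator. Since $\bfK$ is circulant it commutes with $\bfL = \bfK + \bfK^T - 2\bfI$, and $\bfK\bfK^T = \bfI$; therefore $\ba_s^T\bfL\ba_s = \ba^T\bfK^s\bfL(\bfK^T)^s\ba = \ba^T\bfL\ba$, independent of $s$ (and nonzero for non-constant $\ba$ by the nullspace argument already used in Lemma~\ref{lem:nu}). This collapses the formula to $\nu_s^{+} = \tfrac12 - \ba^T\bfK^s\bfD\bb/(\ba^T\bfL\ba)$, in which all $s$-dependence now sits in the single factor $\bfK^s$. The heart of the argument is then to difference in $s$ and convert the first-order operator $\bfD$ into the second-order $\bfL$: only the numerator moves, and $\bfK^{s+1}\bfD - \bfK^s\bfD = \bfK^s(\bfK-\bfI)\bfD = \bfK^s\bfD^2$, where the crucial identity is
\[
\bfD^2 = (\bfK-\bfI)^2 = \bfK^2 - 2\bfK + \bfI = \bfK\prn{\bfK + \bfK^T - 2\bfI} = \bfK\bfL,
\]
using $\bfK\bfK^T = \bfI$. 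This is precisely what makes the discrete Laplacian $\bfL$ of \eqref{eq-recursion} appear. Substituting, transposing the resulting scalar, and using $\bfL^T = \bfL$ together with $\bfK\bfL = \bfL\bfK$ rearranges $\ba^T\bfK^s\bfD^2\bb$ into a quantity of the form $(\bfK^{\bullet}\bb)^T\bfL\ba$, which gives the stated relation; the $\nu_s^{-}$ recursion follows verbatim with $\bfK$ replaced by $\bfK^T$ throughout and $(\bfD^T)^2 = \bfK^T\bfL$ in place of $\bfD^2 = \bfK\bfL$.

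The main obstacle I anticipate is purely the bookkeeping in this last conversion. The identity $\bfD^2 = \bfK\bfL$ introduces an extra shift factor, and the transposition that brings $\bfL$ next to $\ba$ produces an overall sign, so one must track with care whether the final numerator carries $\bfK^s$ or $\bfK^{s+1}$ and which sign it wears. To fix these constants unambiguously I would evaluate both sides on a small circulant example (say $N=3$ with generic non-constant $\ba,\bb$) and match term by term before committing to the precise form recorded in \eqref{eq-recursion}.
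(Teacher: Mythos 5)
Your plan is the same as the paper's own proof: factor $\cK(s,\omega)^T=\bfK(\omega)^T\prn{\bfK^T}^s$, apply Lemma~\ref{lem:nu} to the shifted vector, note that the denominator is $s$-independent, and difference in $s$. Moreover, the algebra you actually wrote out for the $(+)$ branch is correct: the substitution $\ba\mapsto\prn{\bfK^T}^s\ba$ in \eqref{eq-nu} gives $\nu_s^{+}=\tfrac12-\ba^T\bfK^s\bfD\bb/\prn{\ba^T\bfL\ba}$, and differencing produces $\bfK^s\bfD^2=\bfK^{s+1}\bfL$. But the bookkeeping step you deferred is exactly where the claim fails: carrying it out gives
\[
\nu_{s+1}^{+}-\nu_s^{+}
=-\frac{\ba^T\bfK^{s+1}\bfL\bb}{\ba^T\bfL\ba}
=-\frac{\prn{\bfK^{s+1}\bb}^T\bfL\ba}{\ba^T\bfL\ba},
\]
which differs from the first relation in \eqref{eq-recursion} in \emph{both} the sign and the shift index. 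So a correct execution of your strategy refutes the $(+)$ half of the lemma rather than proving it. The paper's proof reaches the printed form only through a transpose slip: it inserts $\bb^T\bfD\prn{\bfK^T}^s\ba$ as the numerator coming from \eqref{eq-nu}, whereas the correct substitution yields $\prn{\prn{\bfK^T}^s\ba}^T\bfD\bb=\ba^T\bfK^s\bfD\bb$; these two scalars differ because $\bfD$ is not symmetric. The $N=3$ check you wisely proposed (but did not run) settles the matter: take $\ba=\be_1$ and $\bb=\be_2=\bfK\be_1$. Direct minimization gives $\nu_0^{+}=1/2$ and $\nu_1^{+}=1$ (indeed $\cK(1,1)^T\ba=\prn{\bfK^T}^2\be_1=\bb$ exactly), while $\ba^T\bfL\ba=-2$ and $\bb^T\bfL\ba=\prn{\bfK\bb}^T\bfL\ba=1$; thus \eqref{eq-recursion} predicts $\nu_1^{+}=\tfrac12+\tfrac{1}{-2}=0$, which is false, whereas the corrected recursion gives $\tfrac12-\tfrac{1}{-2}=1$, which is right.

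Your final remark that the $(-)$ branch "follows verbatim with $\bfK$ replaced by $\bfK^T$ throughout" is also not correct, and the reason is instructive. The numerator of \eqref{eq-nu} always carries $\bfD=\bfK-\bfI$, since it is the derivative of $\omega\mapsto\bfK(\omega)$ and does not change when the integer shift changes direction; only the vector changes. Hence for $\nu_s^{-}$ the numerator is $\prn{\bfK^s\ba}^T\bfD\bb=\ba^T\prn{\bfK^T}^s\bfD\bb$, and differencing brings in $\prn{\bfK^T-\bfI}\bfD=\bfD^T\bfD=-\bfL$: a clean Laplacian, no extra shift, and a sign that cancels. Consequently the second relation in \eqref{eq-recursion} is correct as printed. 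The two branches are genuinely asymmetric ($\bfD^2=\bfK\bfL$ versus $\bfD^T\bfD=-\bfL$), which is why the published lemma is right in its $(-)$ half and wrong in its $(+)$ half; the corrected $(+)$ statement your derivation actually proves is $\nu_{s+1}^{+}=\nu_s^{+}-\prn{\bfK^{s+1}\bb}^T\bfL\ba\big/\prn{\ba^T\bfL\ba}$.
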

\begin{proof}
The proof is easy and is given in Appendix \ref{ap:proof}.
\end{proof}
Filtering out $\nu_s^\pm$ that do not satisfy the CFL condition, we let
\[
\hat{\nu}_s^\pm \equiv
\begin{cases}
\pm s + \nu_s^\pm & \text{ if } \nu_s \in [0,1]\\
\pm s & \text{ otherwise.}
\end{cases}
\]
Then the minimization problem \eqref{eq-gnumin} 
only requires comparison of at most $2N-1$ discrete values,
\beq
\tilde{\nu} =
\argmin_{\tilde{\omega} \in W} \Norm{\bb - \cK(\tilde{\omega})^T\ba}{2}^2
\text{ where } 
W \equiv \{0, \hat{\nu}^+_0, 1, \hat{\nu}^+_1, \cdots,N-1,\hat{\nu}^+_{N-1}\},
\label{eq-recursiongnu}
\eeq
or equivalently, 
$W = \{0,\hat{\nu}^-_{N-1}, 1, \hat{\nu}^-_{N-2}, \cdots 
 , N-1, \hat{\nu}^-_0\}.$

In many examples the data $\ba$ and $\bb$ have localized features.
This fact can be incorporated into our computation
of \eqref{eq-gnumin} by assuming that $\ba$ and $\bb$ are sparse
representations that reflect these features well, thereby $W$.
Reduction of $W$ beyond this may be possible by using discrete
Fourier transforms and exploiting isotropy that might exist in $\ba$
or $\bb$ (see Proposition \ref{prop:period}.)

\begin{defi}[Transport reversal in $\RR$]\label{def:reversal}
Given a matrix $\bfA \in \RR^{N \times M}$, 
let $\ba_j$ denote the $j$-th column of $\bfA$, 
and let $\bb \in \RR^N$ be a given \emph{pivot}.
Let 
\beq
\tilde{\nu}_j \equiv \argmin_{\tilde{\omega} \ge 0} 
\Norm{\ba_j - \cK(\tilde{\omega})^T\bb}{2}^2,  
\quad \text{ for } j=1, \cdots, M.
\label{eq-matrixgnu}
\eeq
This computation is denoted by $\tilde{\bnu} = \tilde{\cC}(\bfA; \bb)$.

We define the \emph{transport} $\cT$ of $\bfA$,
\beq
\cT(\bfA;\tilde{\bnu}) \equiv 
\begin{bmatrix} \cK(\tilde{\nu}_1) & 
\cK(\tilde{\nu}_2) & \cdots & \cK(\tilde{\nu}_{M}) \end{bmatrix} 
\odot 
\begin{bmatrix} \ba_1 & \ba_2  
& \cdots & \ba_{M} \end{bmatrix}. \label{eq-reversal}
\eeq
\end{defi}
Let $\tilde \nu \equiv \tilde \cC(\bfA; \bb)$ and 
$\mathring{\bfA} \equiv \cT(\bfA; - \tilde{\bnu})$.
Now, the orthogonality of the eigenvectors of
$\mathring{\bfA}^T \mathring{\bfA}$ is not strictly preserved
under the action of $\cK(\tilde{\nu})$, but holds up to $\cO(1/N^2)$. 
This is an analogue of Proposition 3 in \cite{titi} and is
stated as follows.

\begin{prop}\label{prop:ortho}
If $\varphi$ is an eigenvector of $\mathring{\bfA}^T \mathring{\bfA}$ 
with eigenvalue $\lambda$, then $\varphi$ is 
also an eigenvector of $\prn{\cK(\tilde{\nu})\mathring{\bfA}}^T \prn{\cK(\tilde{\nu})\mathring{\bfA}}$ to the same $\lambda$ for every
 $\tilde{\nu} \in \RR$, up to $\cO(1/N^2)$.
\end{prop}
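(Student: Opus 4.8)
The plan is to expand the Gram matrix $(\cK(\tilde\nu)\mathring{\bfA})^T(\cK(\tilde\nu)\mathring{\bfA}) = \mathring{\bfA}^T\cK(\tilde\nu)^T\cK(\tilde\nu)\mathring{\bfA}$ and show that $\cK(\tilde\nu)^T\cK(\tilde\nu)$ differs from the identity only by a term of size $\cO(1/N^2)$; the eigenvalue claim then drops out immediately. The structural fact that makes this work is that the integer-shift factor in $\cK(\tilde\nu)$ is orthogonal and so disappears from the product. Concretely, writing $s = \floor{\tilde\nu}$ and $\nu = \tilde\nu - s \in [0,1)$, I would factor $\cK(\tilde\nu) = \bfP\,\bfK(\nu)$ where $\bfP$ is the relevant integer power of $\bfK$ (or of $\bfK^T$). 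Since $\bfK$ is a permutation matrix, $\bfP$ is orthogonal, so $\bfP^T\bfP = \bfI$ and hence
\beq
\cK(\tilde\nu)^T\cK(\tilde\nu) = \bfK(\nu)^T\bfP^T\bfP\,\bfK(\nu) = \bfK(\nu)^T\bfK(\nu) = \bfI + \frac{\nu(1-\nu)}{N^2}\bfL_h,
\eeq
where the last equality is \eqref{eq-ktk} from Lemma \ref{lem:laplacian}.

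Substituting this back and multiplying on the left by $\mathring{\bfA}^T$ and on the right by $\mathring{\bfA}$ gives
\beq
(\cK(\tilde\nu)\mathring{\bfA})^T(\cK(\tilde\nu)\mathring{\bfA}) = \mathring{\bfA}^T\mathring{\bfA} + \frac{\nu(1-\nu)}{N^2}\,\mathring{\bfA}^T\bfL_h\mathring{\bfA}.
\eeq
Applying this to the eigenvector $\varphi$ and using $\mathring{\bfA}^T\mathring{\bfA}\varphi = \lambda\varphi$ yields
\beq
(\cK(\tilde\nu)\mathring{\bfA})^T(\cK(\tilde\nu)\mathring{\bfA})\varphi = \lambda\varphi + \frac{\nu(1-\nu)}{N^2}\,\mathring{\bfA}^T\bfL_h\mathring{\bfA}\varphi,
\eeq
so $\varphi$ is an eigenvector with eigenvalue $\lambda$ modulo the residual term, and it remains only to argue that this term is $\cO(1/N^2)$.

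The main obstacle is precisely giving meaning to ``up to $\cO(1/N^2)$''. The prefactor is harmless, since $\nu(1-\nu) \le 1/4$ uniformly in $\nu$ and $N$; the real content sits in $\mathring{\bfA}^T\bfL_h\mathring{\bfA}\varphi$. Here I would invoke the convention stated after Lemma \ref{lem:upwind_basic}: the columns of $\mathring{\bfA}$ are discretizations of twice-differentiable functions, so that $\bfL_h$ applied to them (and hence to the fixed linear combination $\mathring{\bfA}\varphi$) converges as $N \to \infty$, leaving $\mathring{\bfA}^T\bfL_h\mathring{\bfA}\varphi$ bounded. Under that hypothesis the residual is genuinely of order $1/N^2$ and the proposition follows. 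I would close by noting that this boundedness is exactly where the smoothness of the underlying snapshots enters, and is the discrete analogue of the assumption behind Proposition 3 in \cite{titi}.
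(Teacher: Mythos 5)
Your proof is correct and follows essentially the same route as the paper, whose proof simply states that the result follows immediately from \eqref{eq-ktk} in Lemma \ref{lem:laplacian}; your write-up is the fleshed-out version of that one-liner, with the useful added detail that the integer-shift factor is a permutation matrix and hence cancels in $\cK(\tilde{\nu})^T\cK(\tilde{\nu})$, reducing everything to $\bfK(\nu)^T\bfK(\nu)$. Your closing appeal to the paper's stated convention for interpreting $\cO(1/N^2)$ (discrete Laplacians acting on discretizations of twice-differentiable functions) is exactly the right way to give meaning to the ``up to $\cO(1/N^2)$'' clause.
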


\begin{proof}
Follows immediately from \eqref{eq-ktk} in Lemma \ref{lem:laplacian}.
\end{proof}

\subsection{Sharpening procedure} \label{sec:recon}

Once $\mathring{\bfA} \equiv \cT(\bfA; -\tilde{\bnu})$ is computed,
we can apply the SVD to 
construct a reduced basis representation of $\mathring{\bfA}$.
Let us denote this low-rank representation by $\tilde{\bfA}$,
and columns of $\tilde \bfA$ by $\tilde{\ba}_j$.
For a reconstruction of $\bfA$ itself, we compute the forward
transport, $\cT(\tilde{\bfA}; \tilde{\bnu})$.
While this yields an acceptable reconstruction, the numerical diffusion
arising from the upwind flux causes $\cO(1/N^2)$ amount of smearing.
This numerical diffusion has a particular structure 
\eqref{eq-ktk} in the form of a discrete Laplacian $\bfL_h$. 
This can be utilized to improve the accuracy by 
applying a post processing procedure motivated as follows.

Suppose we are given a column $\ba$, to which we apply the reversal
then reconstruction as above. Then the reconstruction,
which we denote by $\bb$, satisfies the equation
$\cK(\tilde{\nu}) \cK(\tilde{\nu})^T \ba = \bb.$
Recall that
\[
\cK(\tilde{\nu}) \cK(\tilde{\nu})^T = \bfI + \alpha \bfL_h
\where \alpha = \frac{\nu(1 - \nu)}{N^2}
\text{ and } 
\bfL_h \equiv \frac{1}{h^2} ( \bfK + \bfK^T - 2 \bfI ).
\]
Thus we can recover $\ba$ from $\bb$ by solving a discretized
Helmholtz equation augmented with a set of boundary conditions.
For example, we can use the first and last values of $\ba$,
\beq
    \prn{ \bfI + \alpha \bfL_h } \bu = \bb, 
	\quad \text{ satisfying } \quad
    u_1 = a_1 \quad \text{ and } \quad
	u_N = a_N.
\label{eq-dischelm}
\eeq
This inversion acts to remove the diffusive error caused by grid 
interpolation \eqref{eq-fvmupdate}. This can also be seen as a 
sharpening procedure, once rewritten as
\[
    \frac{ \bu - \bb}{k} = \beta \bfL_h \bu, \quad \beta \equiv 
    -\frac{\alpha}{k}.
\]
Due to the negative sign of $\beta$, 
here $\bu$ is shown as the single time-step solution to the backward heat 
equation with step size $k$ (a parameter that has been introduced for 
illustrative purpose).

We will denote this solution operator to \eqref{eq-dischelm} by
$\prn{ \bfI + \alpha \bfL_h }^{-1}.$
Letting $\alpha_j \equiv \nu_j(1 - \nu_j) / N^2$, we apply this
sharpening procedure for each column of the reconstructed $\tilde{\bfA}$,
that is,
\beq
\begin{bmatrix} \prn{ \bfI + \alpha_1 \bfL_h}^{-1} \cK(\tilde{\nu}_1) & 
\cdots & \prn{ \bfI + \alpha_{M-1} \bfL_h}^{-1}\cK(\tilde{\nu}_{M-1})
\end{bmatrix}
\odot
\begin{bmatrix} \tilde{\ba}_1 & \cdots & \tilde{\ba}_{M} \end{bmatrix}.
\label{eq-recon}
\eeq
As mentioned in remarks following Lemma \ref{lem:laplacian},
this procedure aims to address the fact that the discretized advection or translation no longer forms a symmetry group exactly. 
The reversal and reconstruction procedure will be demonstrated
numerically in Section \ref{sec:numerics}.

\subsection{Variable speed transport reversal and linear systems}\label{sec:varlin}

In the previous section we have introduced a transport procedure
amounting to a long-time solution of a constant speed advection equation.
Now we consider a generalization of the reversal problem 
\eqref{eq-contimin} when the advection speed $c > 0$ in \eqref{eq-advection} is 
allowed to depend on the spatial variable $x$. For simplicity,
$c$ will be represented as a piecewise constant function over 
a uniform grid.

In the previous section, the transport reversal \eqref{eq-matrixgnu}
has largely been a discretization of the 
continuous minimization problem \eqref{eq-contimin} with some numerical error
\eqref{eq-identity_error}.
But in considering the variable speed setting, additional differences between 
the discrete and the continuous case come to the fore. Consider $c$ which has the following 
property: 
there exists  $\overline{\omega} > 0$
and a grid $\{x_j\}_{j=0}^{N}$ and grid-sizes $\Delta x_j = x_{j+1} - x_j$,
\beq
    \abs{ \overline{\omega} - \omega_j } < \delta \ll 1
    \where
    \omega_j = \frac{c(x_{j+1/2})}{\Delta x_j}
        \Ffor j = 0, \cdots, N.
    \label{eq-easyvar}
\eeq
That is, even if $c(x_{j+1/2})$ varies, 
care can be taken to adjust size of the cells $\Delta x_j$ so that
the shift number  $\nu_j = \omega_j \Delta t$ behaves like a constant 
multiple of $\Delta t$ for all cells. Then, the discretized problem
is identical to the constant speed case and the techniques introduced
in the previous section apply directly, so the extension
to variable speed $c$ satisfying \eqref{eq-easyvar} is trivial.
Let us give a simple example of $c$ and $\{x_j\}_{j=0}^N$ that satisfies
this property. Consider the advection equation \eqref{eq-advection} and suppose
$c$ took on two values and the grid $\{x_j\}$ was constructed as follows,
\[
c(x) = 
\begin{cases}
 1 & \text{ if }0 \le x < \frac{1}{2} \\
 \frac{1}{2} & \text{ if } \frac{1}{2} \le x \le 1
\end{cases}, 
\Aand
 x_j = 
 \begin{cases}
  \frac{3}{2N}j & \text{ if } j < N/3\\
  \frac{3}{4N} j + \frac{1}{4}& \text{ if } j \ge N/3
 \end{cases}.
\]
If we choose $N$ to be a multiple of $3$, then \eqref{eq-easyvar} holds with 
$\bar{\omega} = 2N/3$ and $\delta = 0$.

However, the property \eqref{eq-easyvar} is not easily guaranteed, especially
when dealing with systems, when the characteristic variables have
different speeds, or when $c$ is allowed to depend on time, as in the nonlinear case.
Here we address the general variable speed case in 
Section \ref{sec:varspeed}, even if 
$c$ does not satisfy the property \eqref{eq-easyvar}.

\subsubsection{Variable speed reversal}\label{sec:varspeed}

We proceed to generalize the transport reversal procedure by
employing the large time-step (LTS) method 
\cite{largetimestep1,largetimestep3,largetimestep2}.
This is also reminiscent of Lagrangian methods such as the particle-in-cell method \cite{dawson,harlow} or its variant the material point method \cite{mpm2,mpm1}.
The LTS method allows 
long-time reversal of the given wave profile without
incurring excessive numerical diffusion,
mimicking the behavior of the matrix $\cK(\tilde \omega)$ .

The given vector $(u_j^n)_{j=1}^N$ will be considered to represent
a discretization of a function $u(x)$ lying on a uniform grid
$\{ x_j \}_{j=0}^{N}$ of the domain $\Dom = [0,1]$.
Let us define the jumps $\Delta_j^n \equiv u_{j}^{n} - u_{j-1}^n,$
where the index $j$ is defined modulo $N$.

Now, the grid points will serve as particles or material points, and their positions
will evolve with time.
 We index the time-dependence by $\ell$,
letting $\{x_j^\ell\}_{j=0}^N$ denote the grid points at time $t_\ell$.
Then we evolve the grid points as a function of 
time, $x_j = x_j(t)$, according to the ordinary
differential equation
\beq
\left\{
\begin{aligned}
\dot{x_j} = c(x_j), \\
x_j(0) = x_j^0,
\end{aligned}
\right.
\Ffor j = 0, 1, \cdots, N,
\label{eq-mpmode}
\eeq
with periodic boundary conditions.
We will evolve backward in time, as is natural for the reversal procedure.
The problem will be solved up to time $t^L < 0$.
The solution at $t^L$ is given by
\beq
    x_j(t^L) = x_j(0) + \int_0^{t^L} c\prn{x_j(t)} \dt .
	\label{eq-mpmode-sol}
\eeq
Recall $c(x)$ was assumed to be piecewise constant, so 
we let $c_j \equiv c(x_{j+1/2})$ and we 
compute \eqref{eq-mpmode-sol} explicitly. 

Let us be given time-steps $0=t^0 > t^1 > \cdots > t^L$ with 
$\Delta t^\ell = t_{\ell} - t_{\ell-1} < 0$. Define
$\Delta t_{ij}^\ell$ as the amount of time $x_j(t)$ lies in the $i$-th cell
$\cC_i$ during the time interval $[t_\ell, t_{\ell-1}]$,
$\Delta t_{ij}^\ell \equiv 
-\abs{\prnc{t \in [t_{\ell-1}, t_\ell]: x_j(t) \in \cC_i}}.$
Naturally, this is a partition of the time interval $[t_{\ell},t_{\ell-1}]$ so
$\Delta t^\ell = \sum_{i=0}^N \Delta t_{ij}^\ell.$
Then the solution $x_j^\ell$ at 
time $t_\ell = t_0 + \sum_{k=1}^\ell \Delta t^k$ is given by
\begin{equation}
x_j^{\ell+1} = x_j^\ell + \sum_{i=1}^N c_i \Delta t_{ij}^\ell
\quad \mod 1.
\end{equation}
Once time stepping has reached the final time $t^L$, 
we can update the cell average by computing the total flux for each cell,
\beq
u_j^{n+1} = u_j^n + \sum_{i=1}^N \Delta_i^n c_i \Delta t_{ij}^\ell .
\label{eq-ujn-update}
\eeq
The procedure is sketched in Figure \ref{fig-mpm-lt}.

\begin{figure}
\begin{tabular}{rr}
    \begin{minipage}{0.40\textwidth}
   Given $u_j^n$, place a material point at each grid point $x_j^0$ 
   (red dots) and 
   compute the jumps $\Delta_j^n$ at these points (red lines.) 
   Assign the jumps to the material points.
   \end{minipage}
   &
   \begin{minipage}{0.40\textwidth}
   Advect the material points, 
   computing the total flux caused by each jump, for each cell.
   (e.g, jump for $x_4^n$ below will change the volume of cells 
   $\cC_2, \cC_3, \cC_4$.)
       \end{minipage} \\
\includegraphics[width=0.45\textwidth]{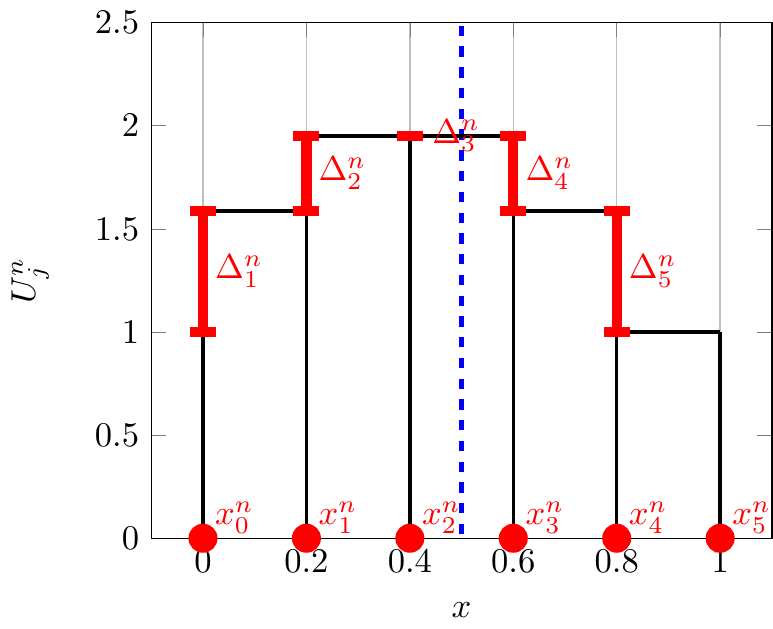} &
\includegraphics[width=0.45\textwidth]{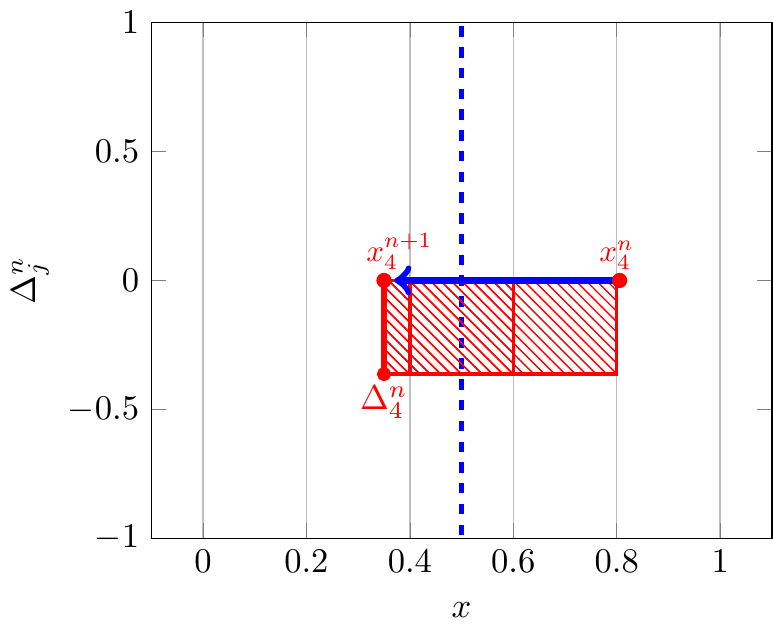} \\
 \end{tabular}
 \caption{An illustration of the variable speed advection
 with periodic boundary conditions. The dotted blue
 line at $x = 0.5$ denotes the interface where $c$ changes.
 After the fluxes for each cell is computed, we update $u_j^n$ by computing the
 total change of volume as in \eqref{eq-ujn-update}, resulting in $u_j^{n+1}$.
 }
 \label{fig-mpm-lt}
\end{figure}

Although the time steps 
$\Delta t^\ell$ are not technically necessary since this ODE
can be solved to any time in one step, we define it in order to maintain
an analogy to the constant speed case. 
Let $T$ denote the period of the solution $x_j$ to \eqref{eq-mpmode}, so that $x_j(t+ T) = x_j(t)$.
Recall that shift number $\tilde{\nu} \in \RR$
satisfied the periodicity condition 
$\cK(\tilde{\nu} + N) = \cK(\tilde{\nu})$, 
which should correspond to periodicity $x_j(t + T) = x_j(t)$.
Notice that $T$ satisfies the relationship
\beq
\bar{c} \equiv \frac{\abs{\Omega}}{T} = \frac{1}{\abs{\Omega}} \int_\Omega c(x) \dx ,
\eeq
where $\abs{\Omega}$ denotes the measure of $\Omega$.
Given $\tilde{\nu}$ and $c$  we let the time step $\Delta t$ satisfy
\[
    \tilde{\nu} = \frac{N\Delta t}{T} 
	= \bar{c}\frac{\Delta t}{\abs{\Omega}/N}
    = \prn{ \sum_{j=0}^N c_j \Delta x_j^0}
    \frac{\Delta t}{\sum_{j=0}^N \Delta x_j / N}.
\]

Using this definition, we denote the reversal 
procedure above by the operator $\cK_c(\tilde{\nu})$.
Note in particular that 
$\cK_c(\tilde{\nu} + N) = \cK_c(\tilde{\nu})$
holds with high numerical accuracy.
Now the minimization problem to be used in the variable speed reversal
can be written down. Replacing the transpose in \eqref{eq-gnumin} by a negation
of the argument using \eqref{eq-negation}, we have
\beq
    \min_{\tilde{\omega} \in \RR}
    \Norm{\ba_j - \cK_c(-\tilde{\omega})\bb }{2}^2.
	\label{eq-varmin}
\eeq
Now we define the reversal for the variable speed case.

\begin{defi}[Variable speed transport reversal]\label{def:varreversal}
Given $\bfA \in \RR^{N \times M}$
and a variable speed $c:[0,N] \to \RR$,
let $\ba_j$ denote the $j$-th column of $\bfA$. 
Let $\bb \in \RR^N$ be a given \emph{pivot}.
Then compute
\beq
\tilde{\nu}_j = \argmin_{\tilde{\omega}\in \RR} 
\Norm{\ba_j - \cK_c(-\tilde{\omega})\bb}{2}^2,  
\quad \text{ for } j= 1, \cdots, M.
\label{eq-vargnu}
\eeq
This computation is denoted by $\tilde{\bnu} = \tilde{\cC}_c(\bfA; \bb)$
where $(\tilde{\bnu})_j = \tilde{\nu}_j$.

We define the \emph{transport} of $\bfA$ with speed $c$, denoted by
\beq
\cT_c(\bfA; \bnu) \equiv 
\begin{bmatrix} \cK_c(\tilde{\nu}_1) & 
 \cdots & \cK_c(\tilde{\nu}_M) \end{bmatrix} \odot 
\begin{bmatrix} \ba_1  & \cdots & \ba_{M} \end{bmatrix}. \label{eq-vartransport}
\eeq
Also define the \emph{transport reversal} of $\bfA$, distinguished by the
sign of the shift numbers $\tilde{\bnu}$,
\beq
\cT_c(\bfA;-\tilde{\bnu}) \equiv 
\begin{bmatrix} \cK_c(-\tilde{\nu}_1) & 
 \cdots & \cK_c(-\tilde{\nu}_M) \end{bmatrix} \odot 
\begin{bmatrix} \ba_1 
& \cdots & \ba_{M} \end{bmatrix}. \label{eq-varreversal}
\eeq

\end{defi}

The orthogonality condition in Proposition \ref{prop:ortho} still holds
with small error.
Also, no simple relation such as \eqref{eq-recursion}
are found, and the sharpening procedure \eqref{eq-recon} cannot
be easily applied. This is due to the loss of convexity
to be discussed in Section \ref{sec:geo}.

\subsubsection{Reversal for linear systems with pivoting}\label{sec:pivoting}
Let us now consider the reversal for snapshot matrices
arising from linear systems of equations. We will focus on the 
acoustic equation (P4), 
\beq
\begin{bmatrix} p \\ u \end{bmatrix}_t
+ \begin{bmatrix} 0 & K \\ 1/\rho & 0 \end{bmatrix} 
\begin{bmatrix} p \\ u \end{bmatrix}_x= 0. \label{eq-acousticfull}
\eeq
Parameters $\rho$ and $K$ are the density and the bulk modulus of 
compressibility of the material, respectively.
Eigendecomposition of the matrix yields eigenpairs,
\beq
\lambda^1 = u -c, \quad 
\br_1 = \begin{bmatrix} - \rho c \\ 1 \end{bmatrix} \Aand
\lambda^2 = u + c,
\quad \br_2 = \begin{bmatrix} \rho c \\ 1 \end{bmatrix}.
\label{eq-eigenvec}
\eeq
where $c \equiv \sqrt{K/\rho}$.
We can rewrite the equation \eqref{eq-acousticfull} in terms
of new variables $r_1$ and $r_2$ by projecting the
state vector $[p,u]^T$ onto the eigenspace spanned by the
two vectors in \eqref{eq-eigenvec}.
When $c$ is constant, the system can be completely decoupled, and
two advection equations can be solved separately.
However, when $c$ depends on the spatial variable, the eigendecomposition
also depends on $x$. This implies that even after the eigendecomposition, there is a coupling between the variables $r_1$ and $r_2$
across space if $\rho c$ varies, so that the wave profiles will evolve. 
For example, $r_1$ may initially be identically zero at initial time but 
suddenly 
develop nonempty support as soon as a wave profile in $r_2$ passes through 
an interface and is partially reflected. An example of this kind is shown in 
Figure \ref{fig:acoustic_htgs_recon} below. 

Therefore we need to dynamically change the pivot vector $\bb$ in 
\eqref{eq-varmin} appropriately to other columns as the wave profile evolves. 
Let us recall the \emph{pivot map} $\ell: \{1, \cdots, M\}
\to \{1, \cdots, M\}$ which takes each column $\ba_j$ to its
corresponding pivot $\ba_{\ell(j)}$. 
Then we may define reversal with pivoting as follows.

\begin{defi}[Variable speed transport reversal with pivoting]
    \label{def:reversalpivot}
Let the matrix $\bfA \in \RR^{N \times M}$ be given,
a pivot map $\ell:\{1, ..., M\} \to \{1, ... , M\}$,
and a variable speed $c:[0,N] \to \RR$.
Then let
\beq
\tilde{\nu}_j \equiv \argmin_{\tilde{\omega}\in \RR} 
\Norm{\ba_j - \cK_c(-\tilde{\omega})\ba_{\ell(j)}}{2}^2,  
\quad \text{ for } j= 1, \cdots, M.
\label{eq-pivotgnu}
\eeq
This computation is denoted by $\tilde{\bnu} = \tilde{\cC}_c(\bfA; \ell)$
where $(\tilde{\bnu})_j = \tilde{\nu}_j$.
\end{defi}

The proper pivoting criterion will depend on the problem at hand, and for 
acoustic equations with heterogeneous media, pivoting when
there is large relative change in the $\ell^2$-norm difference
between the previous and current column was sufficient.
See Example \ref{expl:htgs} for numerical results using this
particular pivoting criterion.

\subsubsection{Numerical experiments} 
\label{sec:numerics}

We apply the transport reversal and reconstruction procedure outlined in this 
section to the acoustic equation, in both homogeneous and heterogeneous 
media. We do not introduce the iterative procedure from Section \ref{sec:rev}, 
and consider relatively simple examples to focus on the effect of
the extensions of the shift operator.

\subsection{Acoustic equation in homogeneous media}
\label{expl:homogeneous}

We apply the reversal \eqref{eq-reversal} to the constant speed acoustic 
equation \eqref{eq-acousticfull} with $K \equiv K_0$ and $\rho \equiv \rho_0$
with periodic boundary conditions. 
For the initial conditions, $p_0$ is given to be a Gaussian hump and $u_0$ 
to be identically zero.

The $100$ snapshots were taken from a $100$-cell solution. After an
eigendecomposition of the state vectors \eqref{eq-eigenvec} we 
transform the state variables $p$ and $u$, to
the characteristic variables $r_1$ and $r_2$.
Then we apply the reversal procedure \eqref{eq-reversal} to the two
snapshot matrices corresponding to these variables.

The decay of the singular values for $\mathring{\bfA}$ is clearly much 
more rapid, as seen from Figure \ref{fig:acoustic_singvals}.
The threshold of $99\%$ is achieved with only $3$ basis vectors.
The reconstruction is plotted against the snapshot itself
in Figure \ref{fig:sfvr_recon}, 
and they are nearly identical.
The POD reconstruction is also plotted.
The $L^2$-errors from the two reconstructions are compared in 
Figure \ref{fig:acoustic_singvals}, where the reversal consistently 
outperforms the na\"ive POD.

\begin{figure}
	\centering
    \begin{tabular}{cc}
   \includegraphics[width=0.45\textwidth]{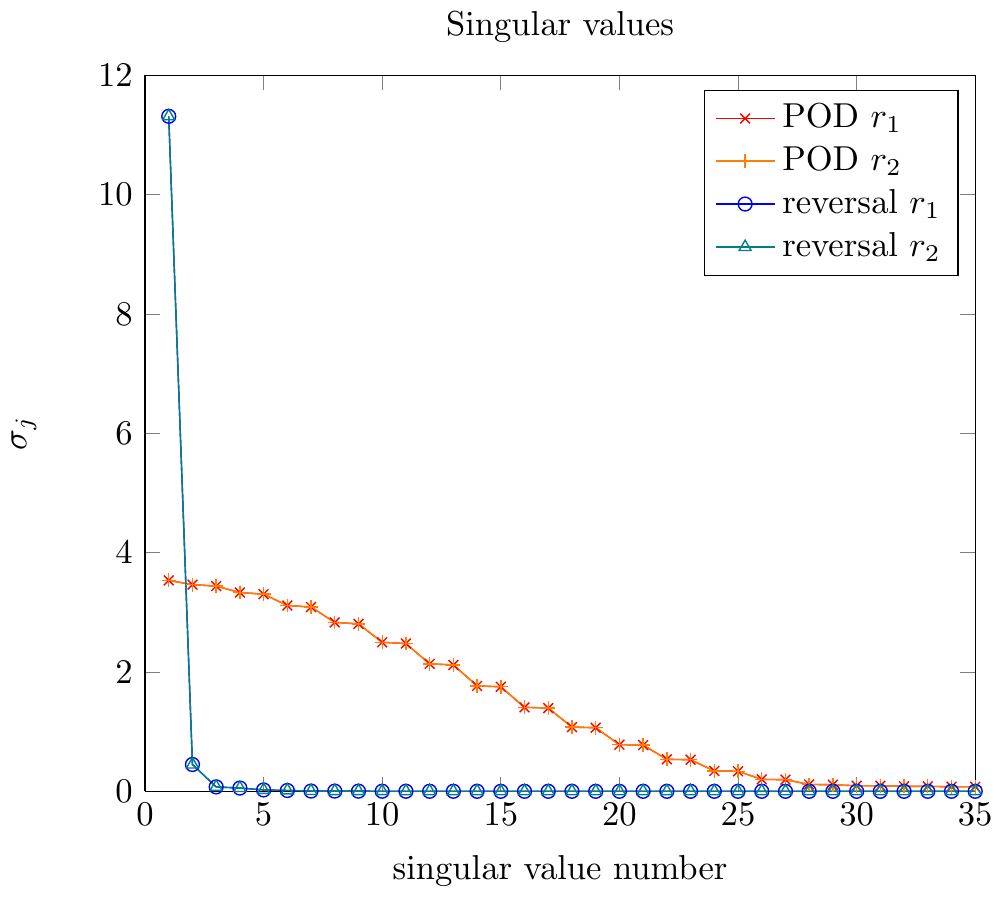} &
 \includegraphics[width=0.45\textwidth]{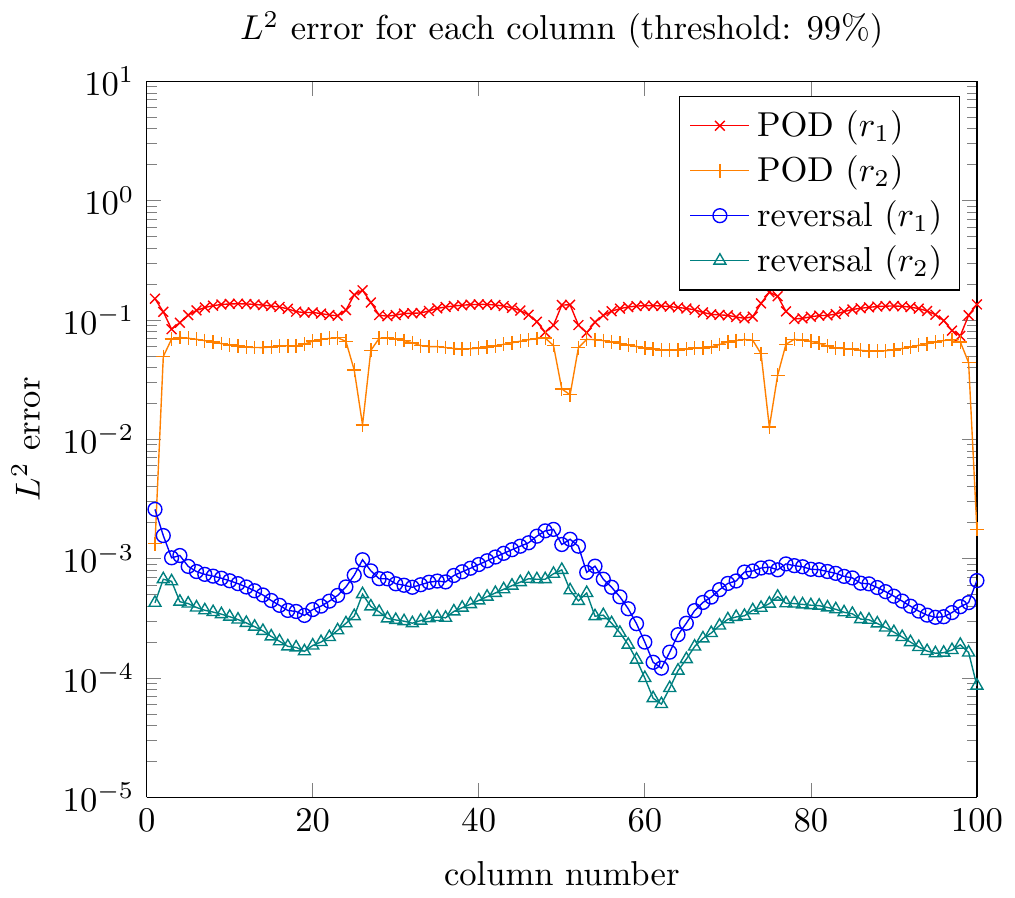} 
\end{tabular}
\caption{Left: Fast decay of singular values of snapshots in the variables $r_1$
and $r_2$ for the acoustic equation with homogeneous media. 
The largest $35$ singular values are shown. Three singular values represent $99\%$ 
of the threshold for both variables.
Right: $L^2$ error for each column of the reconstruction for eigenvector 
variables $r_1$and $r_2$.}\label{fig:acoustic_singvals}
\end{figure}

\begin{figure}
    \centering
  \includegraphics[width=0.8\textwidth]{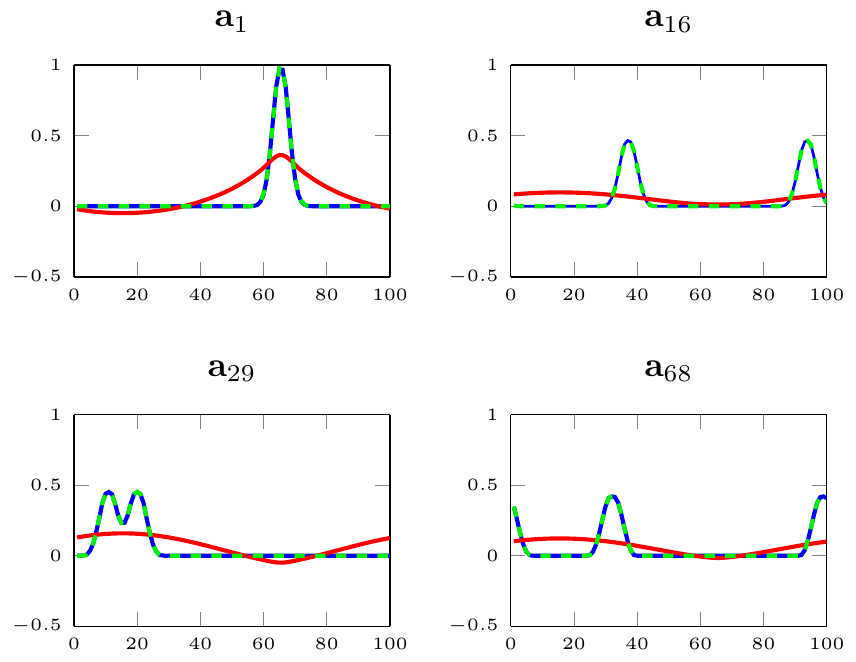}
    \caption{Reconstruction of the solution to acoustic equation
     with homogeneous media, for state variable $p$.
    The snapshot is given in dashed green, the reversal reconstruction in blue,
    and POD reconstruction in red. 
    Three reduced basis vectors were used for both reversal and POD.
    }
    \label{fig:sfvr_recon}
\end{figure}

\subsection{Acoustic equation with heterogeneous media}
\label{expl:htgs}

Now we consider the acoustic equation \eqref{eq-acousticfull} with heterogeneous
media, with two different materials. An interface will be located at $x=0.5$.  Letting $\ell$ designate \emph{left} part of the domain $(0,0.5)$ and $r$ the \emph{right} part of the domain $(0.5,1),$ suppose we have the parameters
$K$ and $\rho$ vary depending on the part of the domain. Here we let
$\rho_\ell = 1$,
$K_\ell = 1$ and
$\rho_r = 4$,
$K_r = 1$.
so that $c_\ell =1$ and $c_r = 0.5$.
We again impose periodic boundary conditions, and this creates
two more interfaces, at $x = 0$ and $1$. 
The initial condition $p_0$ and $u_0$ are both Gaussian 
humps of identical shape traveling towards the interface at $x = 0.5$.

The $100$ snapshots were taken from a $100$-cell solution. An
eigendecomposition of the state-vectors transform the variables
$u$ and $p$, to $r_1$ and $r_2$ as in the previous example. Then we apply variable speed 
reversal with pivoting \eqref{eq-pivotgnu} on each of the
 matrices for these variables.
 
 Here finding a suitable pivot map becomes necessary.
 We track the change of the profile by computing
 the relative $\ell^2$-norm difference between the previous and the current column,
 and pivots to the current column when it exceeds some threshold $\gamma$.
 That is, the pivot map is given by
 \[
\ell(j) = \max_k \left\{k \in \ZZ:  0 \le k \le j, \frac{\Norm{\ba_{j+1} - \ba_j}{2}}{\Norm{\ba_j}{2}} \ge \gamma \right\}
\]
For this example, setting $\gamma = 0.15$ was appropriate.

The achieved decay in singular values, along with the $L^2$-errors for 
each snapshot are shown in Figure \ref{fig:acoustic_htgs_singvals}.
Note how the error for the reversal is concentrated near the interface. 
Away from the interface,
the traveling wave solution is much more accurately captured with the reversal.
The decay of the singular values
can also be interpreted in this context.
While the decay is clearly more rapid than the POD modes, 
the difference is not as striking when compared to the case 
of homogeneous media.
The singular modes whose corresponding singular values belong to this 
trailing part represent the rapidly changing shape of the wave profile 
near the interface.
The slower decay is attributable to the presence of these modes.

A few sample reversal reconstruction are plotted along with the snapshot 
itself and the POD reconstruction in Figure \ref{fig:acoustic_htgs_recon}.
 $7$ and $5$ reduced basis 
vectors, for $r_1$ and $r_2$ respectively, were used for the reconstruction.
The accuracy of the reconstruction visibly deteriorates near the interface.

\begin{figure}
\centering
\begin{tabular}{cc} 
\includegraphics[width=0.45\textwidth]{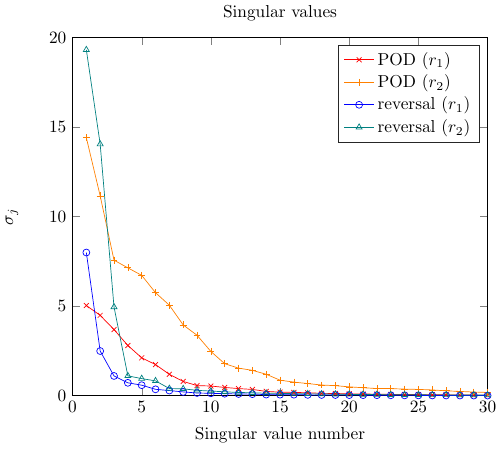}& 
\includegraphics[width=0.45\textwidth]{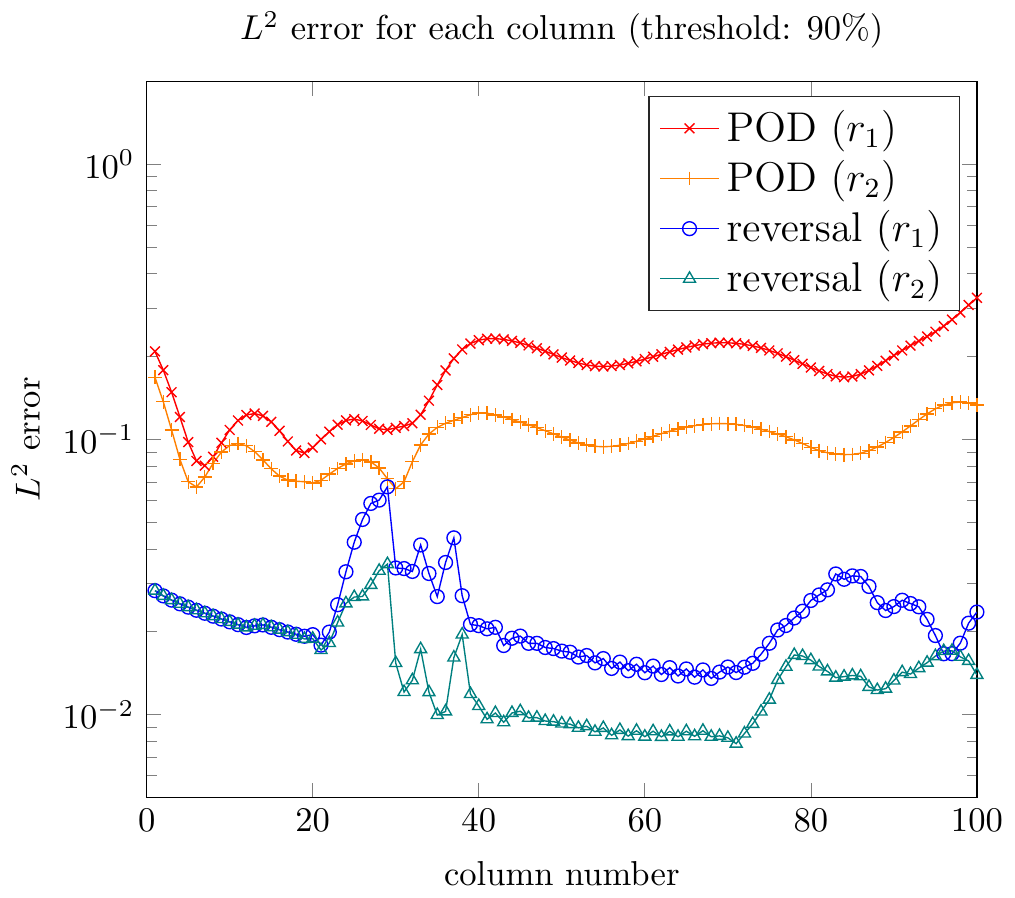} 
\end{tabular}
\caption{Faster decay of singular values of $r_1$ and $r_2$ for 
    the acoustic equation \eqref{eq-advection} with heterogeneous media (left).
    Largest $30$ singular values are shown. $5$ and $7$ singular values 
    represent $90\%$ threshold for these variables, respectively.
    $L^2$-error for each column of reconstruction for characteristic 
    variables $r_1$ and $r_2$.
    The error is concentrated near the snapshots in which the wave profile 
    is undergoing quick change near the interface.
}\label{fig:acoustic_htgs_singvals}
\end{figure}

\begin{figure}
    \centering
   \begin{tabular}{cc}
   \includegraphics[width=0.4\textwidth]{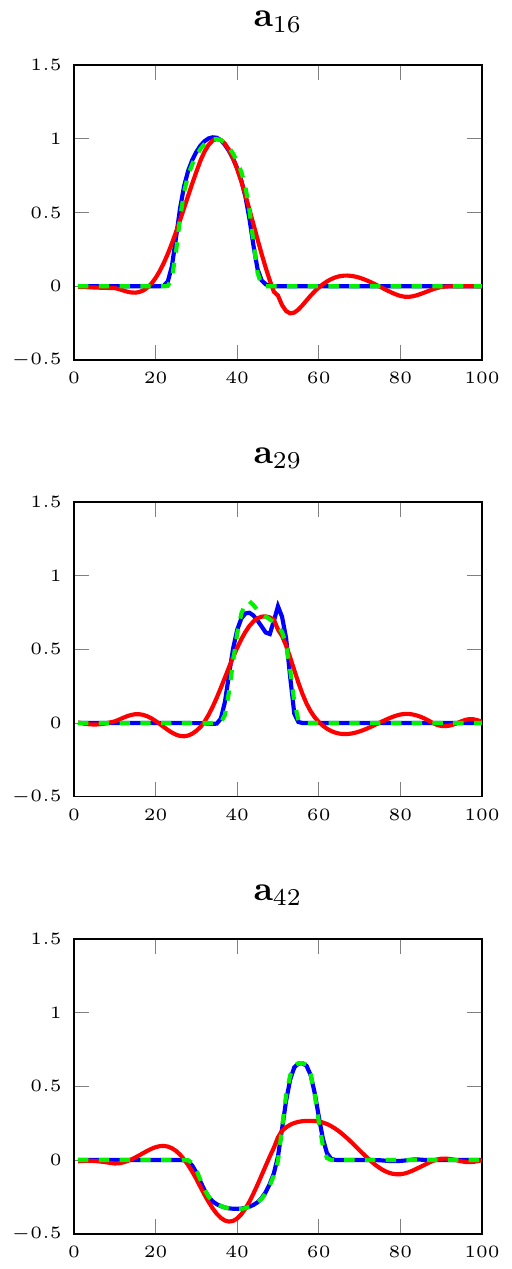}&
   \includegraphics[width=0.4\textwidth]{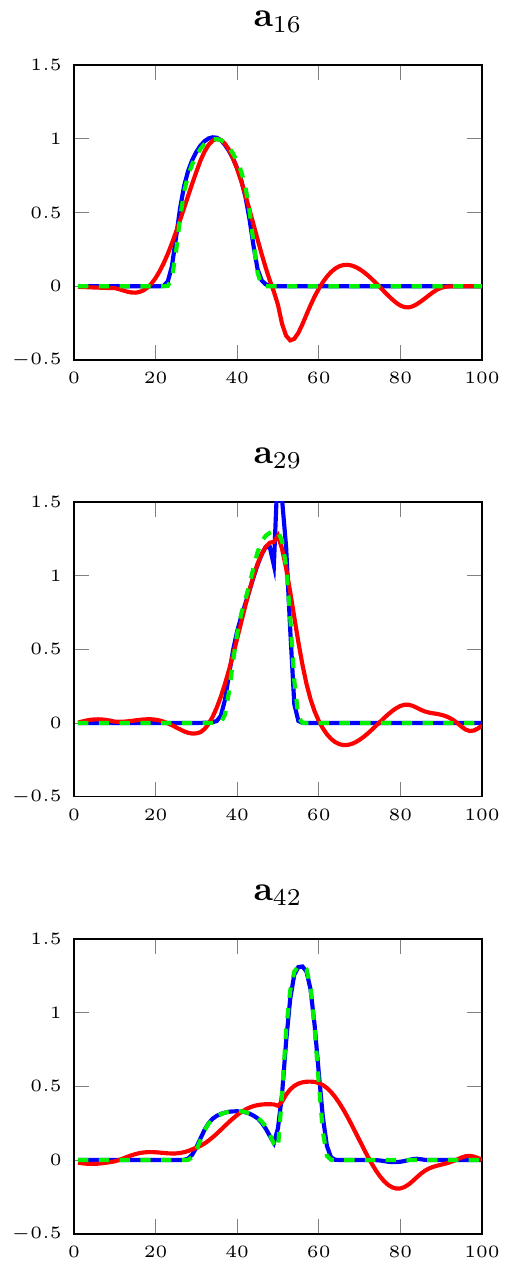}
   \end{tabular}
    \caption{Reconstruction of the solution to acoustic equation
    for state variable $p$ (left column) and $u$ (right column).
    The snapshot is given in dashed green, reversal reconstruction in blue, and
    POD reconstruction in red.
    Four reduced vectors were used for each reconstruction.}
    \label{fig:acoustic_htgs_recon}
\end{figure}

\section{Geometric interpretation}\label{sec:geo}

In this section, we present some geometric interpretations of 
the reversal procedure introduced in the Sections 
\ref{sec:rev} and \ref{sec:varlin} that arise naturally. 
Recall the matrices $\bfK(\nu)$ (Def. \ref{defi-upwindmat}), 
$\cK(\tilde{\nu}) = \cK(s,\nu)$ \eqref{eq-supwindmat} and
variable speed operator $\cK_c(\tilde{\nu})$ \eqref{eq-varmin}. Let
us define,
\beq
\cI_s(\ba) \equiv \prnc{\cK(s,\nu)\ba: \nu \in [0,1]} \text{ and }
\cM(\ba) \equiv \bigcup_{s \in \ZZ} \cI_s 
= \prnc{ \cK(\tilde{\nu})\ba : \tilde{\nu} \in \RR}
.
\label{eq-polys}
\eeq
Observe that $\cI_s(\ba)$
is the convex hull of $\{\bfK^s\ba, \bfK^{s+1} \ba\}$. Therefore,
given any column vector $\ba \in \RR^N$, $\cM(\ba)$
is a union of one-dimensional intervals lying in $\RR^N$ \eqref{eq-polys},
although $\cM(\ba)$ is not convex in $\RR^N$ in general.

In the minimization problem \eqref{eq-gnumin} we are choosing a point on 
this polygon that is closest to $\bb$.
Since $\bfK$ is an isometric map, the vertices of the polygon lie on the
$N$-sphere of radius $\Norm{\ba_j}{2}$.
Note that when choosing a point in the interior of $\cI_s(\ba)$, 
we are not preserving
$\Norm{\ba}{2}$ but $\sum_{j=1}^N (\ba)_j$ due to mass conservation 
in Lemma \ref{lem:upwind_basic} (d). 
This is also reflected in the $\cO(1/N^2)$ numerical diffusion term in 
$\cK(\tilde{\nu})^T \cK(\tilde{\nu}) - \bfI$ \eqref{eq-identity_error}.

When computing the reversal of a matrix $\bfA$, each column $\ba_j$ 
is transformed along its corresponding polygon $\cM(\ba_j)$.
The orientation of this polygon is determined by the column $\ba_j$ itself.  
On the other hand, it is easy to see that the $\cM(\ba)$ is always regular,
since the angle between the tangent vectors along its edges are
$-\ba^T \prn{\bfK + \bfK^T - 2 \bfI} \ba/\Norm{\bfD \ba}{2}^2$.

As a special case, if $\bfA = \bfI$,
all the regular polygons corresponding to each column vector of $\bfA$
coincide, and therefore the reversal is able to 
eliminate the functional \eqref{eq-gnumin}.
Then using a single reduced basis vector of $\mathring{\bfA}$
suffices.
This indicates that the decay of singular values of 
$\mathring{\bfA}$ depends on how well the polygons $\cM(\ba_j)$
are aligned with respected to each other.

It follows easily that the  $\cM(\ba)$ shrinks to a point
as $\ba$ approaches $\bone$. So the problem of finding the optimal
point in on the $\cM(\ba)$ becomes more constrained 
then finally becomes trivially ill-posed when $\ba$ or $\bb$ is 
parallel to $\bone$.
It is also easy to see that the shift number \eqref{eq-matrixgnu}
can always be found. That is, suppose $\ba,\bb \in \RR^N$,
then there always exists a shift number $\tilde{\nu}$ minimizing
$\Norm{\bb - \cK(\tilde{\nu})^T \ba}{2}$.
$\Norm{\bb - \bc}{2}^2$ for $\bc \in \cM(\ba)$ 
is a paraboloid on $\RR^N$ 
restricted to a compact subset, so it yields a minimum in $\cM(\ba)$. 
This minimum may not be unique, but the addition of a proper regularization 
term as in \eqref{eq-bnumin-penalty} will yield uniqueness for the problem.

Now, let us turn our attention to the dimension of the space
spanned by vertices of $\cM(\ba)$. 
The dimension depends on the periodicity of $\ba$, 
in particular when the period of $\ba$ is strictly smaller than $N$.
In the presence of such smaller periods, also called 
isotropy \cite{marsden1}, the minimization \eqref{eq-gnumin} can be further 
reduced;
and the smaller the period of $\ba$, the smaller the dimension should be.
This is eventually related to the period of the functional \eqref{eq-gnumin}, 
summarized in the next proposition and remarks that follow.

Note that a period of a function $g$ defined on $\RR$ is the smallest
number $L > 0$ such that $g(x + L) = g(x)$ for all $x \in \RR$.

\begin{prop}\label{prop:period}
	Suppose we are given vectors $\ba,\bb \in \RR^N$ both not parallel to $\bone$.
	Defining 
	$g(\tilde{\nu}) \equiv \bb - \cK(\tilde{\nu})^T \ba$,
	let us denote by $L$ the period of $g$, 
	$\cF$ the discrete Fourier transform,
	and $\text{gcd}$ the greatest common divisor.
	Then 
	\beq
	L = \frac{N}{\mathrm{gcd} \left[ \supp  \cF(\ba) \setminus \{0\} \right]}
	\label{eq-period}
	\eeq
\end{prop}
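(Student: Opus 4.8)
The plan is to pass to the discrete Fourier domain, where each operator $\cK(\tilde{\nu})$ becomes exactly diagonal, and to analyze the period of $g$ mode by mode. This step is essential: the composition law for $\cK$ holds only up to $\cO(1/N^2)$ (Lemma \ref{lem:supwind_basic}(c)), so one cannot reason about the period in the state space via identities like $\cK(-L)=\bfI$, whereas in Fourier variables the dependence on $\tilde{\nu}$ is \emph{exact}. Since $\bfK$ is the cyclic shift, it is circulant with eigenvalue $z_k \equiv e^{-2\pi i k/N}$ on the $k$-th Fourier mode, and $\bfK^T$ has eigenvalue $z_k^{-1}$. Writing $\tilde{\nu} = s + \nu$ with $s=\floor{\tilde{\nu}}$ and $\nu$ the fractional part, the definition \eqref{eq-supwindmat} shows that $\cK(\tilde{\nu})$ acts on the $k$-th mode by the scalar multiplier
\[
\mu_k(\tilde{\nu}) = z_k^{\,s}\bigl[(1-\nu)+\nu z_k\bigr],
\]
a single continuous, piecewise-linear function of $\tilde{\nu}\in\RR$ (valid for $s$ of either sign, since $\bfK^T$ contributes $z_k^{-1}$). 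Using $\cK(\tilde{\nu})^T = \cK(-\tilde{\nu})$ from \eqref{eq-negation}, the components of $\cF(g)$ are $\widehat{g}_k(\tilde{\nu}) = \widehat{b}_k - \mu_k(-\tilde{\nu})\,\widehat{a}_k$.

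Next I would reduce the period of $g$ to a least common multiple of modewise periods. As $\cF$ is a bijection, a number $L>0$ is a period of $g$ if and only if it is a period of every $\widehat{g}_k$. For $k\notin S \equiv \supp\cF(\ba)\setminus\{0\}$ we have $\widehat{a}_k=0$, so $\widehat{g}_k$ is constant and imposes no constraint; the hypothesis that $\ba$ is not parallel to $\bone$ guarantees $S\neq\emptyset$, so $g$ is genuinely nonconstant and $L$ is well defined. For $k\in S$, the function $\widehat{g}_k$ has the same set of periods as $\mu_k(-\,\cdot\,)$, since it is obtained from it by multiplication by the nonzero constant $\widehat{a}_k$ and translation by $\widehat{b}_k$. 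The key local claim is that the smallest period of $\mu_k$ is exactly $d_k \equiv N/\gcd(k,N)$: at integer arguments the values $\mu_k(s)=z_k^{\,s}$ cycle with least period equal to the multiplicative order $N/\gcd(k,N)$ of $z_k$, and $\mu_k$ is the piecewise-linear path joining these points; since $z_k\neq 1$ for $k\neq 0$, the direction of consecutive edges changes by the factor $z_k\neq 1$, so $\mu_k$ is non-differentiable exactly at the integers. Hence any period of $\mu_k$ must be an integer, and then a multiple of $d_k$. Because the set of periods of a continuous nonconstant function is a closed subgroup $d_k\ZZ$ of $\RR$, it follows that $L$ is a common period if and only if $d_k\mid L$ for all $k\in S$, whence $L = \operatorname{lcm}_{k\in S}\bigl(N/\gcd(k,N)\bigr)$.

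Finally I would reduce this lcm to the single gcd in \eqref{eq-period}. Comparing $p$-adic valuations $v_p$ for each prime $p$, with $e\equiv v_p(N)$, gives
\[
v_p\!\Bigl(\operatorname{lcm}_{k\in S} N/\gcd(k,N)\Bigr) = e - \min\!\bigl(e,\,\min_{k\in S} v_p(k)\bigr) = e - \min\!\bigl(e,\, v_p(\gcd S)\bigr),
\]
which is precisely $v_p\bigl(N/\gcd(S\cup\{N\})\bigr)$; since this holds for every $p$, we get $L = N/\gcd(S\cup\{N\})$. To drop $N$ from the gcd I would use the reality of $\ba$: conjugate symmetry $\widehat{a}_{N-k}=\overline{\widehat{a}_k}$ makes $S$ invariant under $k\mapsto N-k$, so both $k$ and $N-k$ lie in $S$ and therefore $\gcd(S)\mid N$. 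Consequently $\gcd(S\cup\{N\})=\gcd(S)$, giving $L = N/\gcd(S) = N/\gcd\bigl[\supp\cF(\ba)\setminus\{0\}\bigr]$, which is \eqref{eq-period}.

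I expect the main obstacle to be the exactness of the local step: establishing that $d_k$ is the \emph{smallest} period of $\mu_k$, rather than a proper divisor, must be argued carefully through the corner (non-differentiability) structure, precisely because the convenient semigroup identity for $\cK$ is only approximate. By contrast, the valuation identity and the reality-based reduction $\gcd(S\cup\{N\})=\gcd(S)$ are routine once the Fourier picture is in place.
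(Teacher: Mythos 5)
Your proof is correct, and it reaches \eqref{eq-period} by a genuinely different route than the paper. The paper stays in state space: it first rules out non-integer periods by a norm (diffusion) argument --- if a period $L$ had fractional part $\alpha\in(0,1)$, then $g(L)=g(0)$ would force $\prn{\bfK^T}^{L-\alpha}\bfK(\alpha)^T\ba=\ba$, which is impossible because $\bfK(\alpha)^T\bfK(\alpha)=\bfI+\alpha(1-\alpha)\bfL$ and $\ba^T\bfL\ba\neq 0$ for $\ba$ not parallel to $\bone$, so a fractional shift strictly changes $\Norm{\ba}{2}$ --- then identifies integer periods of $g$ with cyclic periods of $\ba$ via $\bfK^L\ba=\ba$ (whence $L\mid N$), and finally asserts, essentially without proof, that the least cyclic period equals $N/\gcd\left[\supp\cF(\ba)\setminus\{0\}\right]$. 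You instead diagonalize by the DFT at the outset, so the $\tilde{\nu}$-dependence is exact on each mode; integrality of the period comes from the corner (non-differentiability) structure of the piecewise-linear multiplier paths $\mu_k$ rather than from norm loss (these are two manifestations of the same fact, namely that fractional shifts fail to be group elements); the modewise least periods are the multiplicative orders $N/\gcd(k,N)$; and the conclusion follows from the valuation identity $\mathrm{lcm}_{k\in S}\, N/\gcd(k,N)=N/\gcd(S\cup\{N\})$ together with conjugate symmetry of real signals to drop $N$ from the gcd. Your route buys two things the paper's proof does not supply: a complete argument for the final Fourier/gcd step (the paper's ``we simply take the discrete Fourier transform'' hides exactly the lcm-to-gcd bookkeeping you carry out), and an explicit identification of where reality of $\ba$ enters --- without $\gcd(S)\mid N$ the correct formula would be $N/\gcd(S\cup\{N\})$, a subtlety the statement silently relies on. What the paper's argument buys in exchange is brevity and a thematic link to Lemma \ref{lem:laplacian}: the impossibility of non-integer periods is read off directly from the $\cO(1/N^2)$ numerical-diffusion residual that the paper uses throughout to quantify the loss of the group property.
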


\begin{proof}
The proof is given in Appendix \ref{ap:proof}.
\end{proof}

Given this dimension $L$ in \eqref{eq-period}, we can  reduce the discrete set $W$ in \eqref{eq-recursion} by considering only the first $2L-1$ values.
The proposition characterizes the isotropy 
in a discrete case, whose continuous version was mentioned but not detailed in 
\cite{marsden1}.

In the variable speed case, the polygon  $\cM$ no longer retains its regularity.
Let us define a variable speed counter part to \eqref{eq-polys},
\beq
\cI_{c,s}(\ba) \equiv \prnc{\cK_c(\tilde{\nu})\ba: \nu \in [s,s+1]} \text{ and }
\cM_c(\ba) \equiv \bigcup_{s \in \ZZ} \cI_s = \prnc{ \cK_c(\tilde{\nu})\ba : \nu \in \RR}
.
\label{eq-varpolys}
\eeq
Mass is not preserved and $\cI_{c,s}$ is not guaranteed to be convex. 
The variable speed introduces
more vertices to the polygon $\cM_c(\ba)$ and $\cI_{c,s}(\ba)$ is itself now 
a union of more convex hulls. This makes the minimization
problem \eqref{eq-varmin} more challenging and causes the 
sharpening procedure \eqref{eq-recon} to run into difficulties, 
outside simple special cases for which one may impose
more boundary conditions \eqref{eq-dischelm} near the interface.

\section{Conclusion and future work}
This paper introduced a greedy algorithm that extracts the transport structure 
from the snapshot matrix by building on the template fitting strategy. 
Extensions of the algorithm though the generalizations of the shift operators 
were also considered. Numerical experiments show that the algorithm can capture 
complex hyperbolic behaviors in examples where shocks and interfaces 
are present.

The objective of this approach is to construct a reduced order model
of fully nonlinear hyperbolic problems, for use in high-dimensional 
applications arising in UQ and control design. In future work, the problem of
post-processing the output from the transport algorithm for use with 
existing projection-based model reduction methods will be investigated.
Extension of the algorithm to the multidimensional setting is currently
under development.

\section*{Acknowledgement}
We would like to thank Steven L. Brunton, Anne Greenbaum, and J. Nathan Kutz
for many helpful discussions.

\appendix
\section*{Appendices}
\addcontentsline{toc}{section}{Appendices}
\renewcommand{\thesubsection}{\Alph{subsection}}

\subsection{Proofs to Lemmas} \label{ap:proof}

Proof to Lemmas \ref{lem:upwind_basic} and \ref{lem:recursion},
Proposition \ref{prop:period} are given below.

\begin{proof}[Lemma \ref{lem:upwind_basic}]
\begin{itemize}
\item[(a)] Follows from the definition since $\bfI, \bfK$ commute,
\bals
\bfK(\nu) \bfK(\omega) 
&= (1-\nu)(1-\omega) \bfI + \nu(1-\omega) \bfK + \omega(1-\nu) \bfK  + \nu\omega\bfK^2\\
&=\prn{(1- \omega)\bfI + \omega \bfK}\prn{ (1-\nu) \bfI + \nu \bfK} 
= \bfK(\omega) \bfK(\nu).
\eals
\item[(b)] Again from the definition,
\bals
\bfK(\nu)^T \bfK(\omega) &= \prn{ (1-\nu) \bfI + \nu \bfK^T} 
\prn{(1- \omega)\bfI + \omega \bfK} \\
&=\prn{(1- \omega)\bfI + \omega \bfK}\prn{ (1-\nu) \bfI + \nu \bfK^T} 
= \bfK(\omega) \bfK(\nu)^T.
\eals
\item[(c)] We have
\[
\bfK(\nu) \bfK(\omega) = (1- \nu - \omega) \bfI + (\nu + \omega) \bfK
+ \frac{\nu \omega}{N^2}\frac{ \bfI - 2\bfK + \bfK^2}{1/N^2}.
\]
The matrix in the last term,
$\bfI - 2 \bfK + \bfK^2
= \bfK \prn{ \bfK^T + \bfK - 2\bfI} = \frac{1}{N^2}\bfK \bfL_h, $
is a shifted $\bfL_h$.
\item[(d)] This follows from the fact that the sum of the rows of $\bfK(\nu)$
is equal to $\begin{bmatrix} 1 & \cdots & 1 \end{bmatrix}$.
\end{itemize}
\end{proof}

\begin{proof}[Lemma \ref{lem:recursion}]
 Consider the $(+)$ case, the $(-)$ case follows similarly.
 \[
 \nu_s^+ =\argmin_{\omega \in \RR} \Norm{\bb - \prn{\bfK(\omega)\bfK^s}^T \ba}{2}^2
 = \argmin_{\omega \in \RR} \Norm{\bb - \bfK(\omega)^T \prn{\bfK^T}^s\ba}{2}^2.
 \]
 Using this in formula \eqref{eq-nu},
 \[
\nu_s^+ = \frac{1}{2} \prn{ 1 - 2 \frac{ \bb^T \prn{\bfK - \bfI} \prn{\bfK^T}^s\ba}{\ba^T \prn{\bfK + \bfK^T -2 \bfI} \ba}}.
 \]
Then we have
\[
\nu_{s+1}^+ = \frac{1}{2} \prn{ 1 - 2 \frac{ \bb^T \bfD \prn{\bfK^T}^{s+1}\ba}{\ba^T \bfL \ba}} 
= \nu_s^+ 
+ \frac{ \bb^T \bfL \prn{\bfK^T}^s\ba}{\ba^T \bfL \ba} = \nu_s^+ + \frac{ \prn{\bfK^s\bb}^T \bfL \ba}{\ba^T \bfL \ba}.
\]
\end{proof}

\begin{proof}[Proposition \ref{prop:period}]
	We will call $L$ a period of the vector $\ba$ if
	$L$ is the smallest  number such that $a_{j+L} = a_j$,
	where the indices are computed modulo $N$.

	It is easy to see that if $\ba$ has period $L$ then 
	$g(\tilde{\nu}+L) = g(\tilde{\nu})$. 
	
	Suppose $g$ has period $L$, then
	$L$ must be an integer. If $L$ has
	nonzero fractional part denoted by $\alpha$ then 
	\[
	g( L) = \bb - \prn{\bfK^T}^{L-\alpha} \bfK(\alpha)^T \ba 
		= \bb - \ba = g(0).
	\]
	So we must have that 
	$\prn{\bfK^T}^{L-\alpha}\bfK(\alpha)^T \ba = \ba$. Taking the 2-norm on both sides,
	\bals
		\Norm{\prn{\bfK^T}^{L-\alpha}\bfK(\alpha)^T \ba}{2}^2 &= 
		\ba^T \bfK(\alpha)^T \bfK(\alpha) \ba
		= \ba^T \ba - \frac{\alpha(1-\alpha)}{N^2} \ba^T \bfL \ba
		> \Norm{\ba}{2}^2,
	\eals
	for non-constant $\ba$ since $-\bfL$ is positive semi-definite
	with nullspace equal to that of constant vectors.
	Hence $\alpha$ cannot be in $(0,1)$ it so must be zero.
	
	Now since $L$ is an integer modulo $N$ let us assume $L>0$ without
	loss of generality, then
	\[
	\bb - \prn{\bfK^T}^\ell \ba = \bb - \prn{\bfK^T}^{\ell +L} \ba
	\quad \text{ for some } \quad \ell< N,
	\]
	which implies $\bfK^L\ba = \ba$, so $\ba$ has period $L$.
	Also, since $L$ is smallest number satisfying this equality,
	$L$ divides $N$. 

	Therefore we only need to find the dividend, 
	for which we simply take the discrete
	Fourier transform and compute the greatest common divisor
	of the nonzero frequencies. This yields the equation \eqref{eq-period}.
\end{proof}

\bibliographystyle{siamplain}
\bibliography{references}

\begin{thebibliography}{10}

\bibitem{amsallem}
{\sc R.~Abgrall and D.~Amsallem}, {\em Robust model reduction by $l^1$-norm
  minimization and approximation via dictionaries: Application to linear and
  nonlinear hyperbolic problems}, Preprint.

\bibitem{param5}
{\sc D.~Amsallem and C.~Farhat}, {\em An interpolation method for adapting
  reduced-order models and application to aeroelasticity}, AIAA Journal, 46
  (2008), pp.~1803--1813.

\bibitem{param2}
{\sc D.~Amsallem and C.~Farhat}, {\em An online method for interpolating linear
  parametric reduced-order models}, SIAM J. Sci. Comput., 33 (2011),
  pp.~2169--2198.

\bibitem{mpm2}
{\sc D.~S. A.R.~York and H.~Schreyer}, {\em Fluid-membrane interaction based on
  the material-point method}, Int. J. Numer. Methods Engrg., 48 (2000),
  pp.~901--924.

\bibitem{titi}
{\sc N.~Aubry, W.-Y. Lian, and E.~Titi}, {\em Preserving symmetries in the
  proper orthogonal decomposition}, SIAM J. Sci. Comput., 14 (1993),
  pp.~483--505.

\bibitem{pod02}
{\sc G.~Berkooz, P.~Holmes, and J.~Lumley}, {\em The proper orthogonal
  decomposition in the analysis of turbulent flows}, Ann. Rev. Fluid. Mech., 25
  (1993), pp.~539--575.

\bibitem{pod01}
{\sc G.~Berkooz and E.~Titi}, {\em Galerkin projections and the proper
  orthogonal decomposition for equivariant equations}, Phys. Lett. A, 174
  (1993), pp.~94--102.

\bibitem{Carlberg15}
{\sc K.~Carlberg}, {\em Adaptive h-refinement for reduced-order models},
  International Journal for Numerical Methods in Engineering, 102 (2015),
  pp.~1192--1210,
  \href{http://dx.doi.org/10.1002/nme.4800}{doi:\nolinkurl{10.1002/nme.4800}},
  \url{http://dx.doi.org/10.1002/nme.4800}.

\bibitem{pod03}
{\sc E.~Christensen, M.~Brons, and J.~Sorensen}, {\em Evaluation of proper
  orthogonal decomposition-based decomposition techniques applied to
  parameter-dependent nonturbulent flows}, SIAM J. Sci. Comput., 21 (2000),
  pp.~1419--1434.

\bibitem{param3}
{\sc Y.~C. D.~Amsallem, M.~Zahr and C.~Farhat}, {\em Design optimization using
  hyper-reduced-order models}, Preprint.,  (2014).

\bibitem{mpm1}
{\sc Z.~C. D.~Sulsky and H.~Schreyer}, {\em A particle method for
  history-dependent materials}, Comput. Meths. Appl. Mech. Engrg., 118 (1994),
  pp.~179--196.

\bibitem{dawson}
{\sc J.~Dawson}, {\em Particle simulation of plasmas}, Reviews of Modern
  Physics, 55 (1983).

\bibitem{pod04}
{\sc A.~Deane, I.~Kevrekidis, G.~Karniadakis, and S.~Orszag}, {\em
  Low-dimensional models for complex geometry flows: Applications to grooved
  channels and circular cylinders}, Phys. Fluids A, 3 (1991), pp.~2337--2354.

\bibitem{gappy-pod}
{\sc R.~Everson and L.~Sirovich.}, {\em The karhunen–loeve procedure for
  gappy data}, J Opt Soc Am, 12 (1995), pp.~1657--1664.

\bibitem{gower2004procrustes}
{\sc J.~C. Gower and G.~B. Dijksterhuis}, {\em Procrustes problems}, vol.~3,
  Oxford University Press Oxford, 2004.

\bibitem{harlow}
{\sc F.~Harlow}, {\em A machine calculation method for hydrodynamic problems},
  Los Alamos Scientific Laboratory report LAMS,  (1955).

\bibitem{Helgason2011}
{\sc S.~Helgason}, {\em Integral Geometry and Radon Transforms}, Springer New
  York, New York, NY, 2011, ch.~The Radon Transform on $\RR^n$, pp.~1--62,
  \href{http://dx.doi.org/10.1007/978-1-4419-6055-9_1}{doi:\nolinkurl{10.1007/978-1-4419-6055-9_1}},
  \url{http://dx.doi.org/10.1007/978-1-4419-6055-9_1}.

\bibitem{pod06}
{\sc P.~Holmes, J.~Lumley, and G.~Berkooz}, {\em Turbulence, Coherent
  Structures, Dynamical Systems and Symmetry}, Cambridge University Press,
  1996.

\bibitem{pod07}
{\sc P.~Holmes, J.~Lumley, G.~Berkooz, J.~Mattingly, and R.~Wittenberg}, {\em
  Low-dimensional models of coherent structures in turbulence}, Phys. Rep., 287
  (1997), pp.~337--384.

\bibitem{kirby}
{\sc M.~Kirby and D.~Armbruster}, {\em Reconstructing phase space from pde
  simulations,}, Z. Angew. Math. Phys., 43 (1992), pp.~999--1022.

\bibitem{pod08}
{\sc K.~Kunisch and S.~Volkwein}, {\em Control of {Burgers’} equation by a
  reduced order approach using proper orthogonal decomposition}, JOTA, 102
  (1999), pp.~345--371.

\bibitem{largetimestep1}
{\sc R.~J. LeVeque}, {\em Large time step shock-capturing techniques for scalar
  conservation laws}, SIAM J. Numer. Anal., 19 (1982), pp.~1091–--1109.

\bibitem{largetimestep3}
{\sc R.~J. LeVeque}, {\em Convergence of a large time step generalization of
  {Godunov's} method for conservation laws}, Communications on pure and applied
  mathematics, 37 (1984), pp.~463--477.

\bibitem{largetimestep2}
{\sc R.~J. LeVeque}, {\em A large time step generalization of {Godunov’s}
  method for systems of conservation laws}, SIAM J. Numer. Anal., 22 (1985),
  pp.~1051--1073.

\bibitem{fvmbook}
{\sc R.~J. LeVeque}, {\em Finite Volume Methods for Hyperbolic Problems},
  Cambridge University Press, 1st~ed., 2002.

\bibitem{param1}
{\sc Y.~Maday and B.~Stamm}, {\em Locally adaptive greedy approximations for
  anisotropic parameter reduced basis spaces}, SIAM J. Sci. Comput., 35 (2013),
  pp.~A2417--A2441.

\bibitem{pod15}
{\sc H.~M. Park and M.~W. Lee}, {\em An efficient method of solving the
  navier-stokes equations for flow control}, Internat. J. Numer. Methods
  Engrg., 41 (1998), pp.~1133--1151.

\bibitem{param4}
{\sc A.~Paul-Dubois-Taine and D.~Amsallem}, {\em An adaptive and efficient
  greedy procedure for the optimal training of parametric reduced-order
  models}, Int. J. Numer. Meth. Engng, 102 (2015), pp.~1262--1292.

\bibitem{Pulch2011}
{\sc R.~Pulch and D.~Xiu}, {\em Generalised polynomial chaos for a class of
  linear conservation laws}, Journal of Scientific Computing, 51 (2011),
  pp.~293--312.

\bibitem{pod18}
{\sc M.~Rathinam and L.~Petzold}, {\em Dynamic iteration using reduced order
  models: A method for simulation of large scale modular systems}, SIAM J.
  Numer. Anal., 40 (2002), pp.~1446--1474.

\bibitem{pod17}
{\sc M.~Rathinam and L.~Petzold}, {\em A new look at proper orthogonal
  decomposition}, SIAM J. Numer. Anal., 41 (2003), pp.~1893--1925.

\bibitem{pod20}
{\sc S.~Ravindran}, {\em Reduced-order adaptive controllers for fluid flows
  using proper orthogonal decomposition}, J. Sci. Comput., 15 (2000),
  pp.~457--478.

\bibitem{Schulze15}
{\sc J.~Reiss, P.~Schulze, and J.~Sesterhenn}, {\em The shifted proper
  orthogonal decomposition: A mode decomposition for multiple transport
  phenomena},  (2015),
  \href{http://arxiv.org/abs/1512.01985}{arXiv:1512.01985}.

\bibitem{marsden2}
{\sc C.~W. Rowley, I.~G. Kevrekidis, J.~E. Marsden, and K.~Lust}, {\em
  Reduction and reconstruction for self-similar dynamical systems},
  Nonlinearity,  (2003), p.~1257.

\bibitem{marsden1}
{\sc C.~W. Rowley and J.~E. Marsden}, {\em Reconstruction equations and the
  {Karhunen-Lo\`eve} expansion for systems with symmetry}, Physica D,  (2000),
  pp.~1--19.

\bibitem{dmd1}
{\sc C.~W. Rowley, I.~Mezi\'c, S.~Bagheri, P.~Schlatter, and D.~S. Henningson},
  {\em Spectral analysis of nonlinear flows}, J. Fluid Mech., 641 (2009),
  pp.~115--127.

\bibitem{pod23}
{\sc N.~Smaoui and D.~Armbruster}, {\em Symmetry and the karhunen-lo\`eve
  analysis}, SIAM J. Sci. Comput., 18 (1997), pp.~1526--1532.

\bibitem{dmd2}
{\sc J.~H. Tu, C.~W. Rowley, D.~M. Luchtenburg, S.~L. Brunton, and J.~N. Kutz},
  {\em On dynamic mode decomposition: Theory and applications}, Preprint.,
  (2013).

\bibitem{pod27}
{\sc K.~Veroy and A.~Patera}, {\em Certified real-time solution of the
  parametrized steady incompressible navier stokes equations: rigorous
  reduced-basis a posteriori error bounds}, Int. J. Numer. Methods Fluids, 47
  (2005), pp.~773--788.

\bibitem{villani2008optimal}
{\sc C.~Villani}, {\em Optimal transport: old and new}, vol.~338, Springer
  Science \& Business Media, 2008.

\bibitem{pod26}
{\sc K.~Willcox}, {\em Unsteady flow sensing and estimation via the gappy
  proper orthogonal decomposition}, Comput. Fluids, 35 (2006), pp.~208--226.

\end{thebibliography}

\end{document}